\documentclass[11pt]{article}
\usepackage{graphicx} 
\usepackage{amssymb,amsthm,amsmath,mathrsfs,fullpage, hyperref}
\usepackage{tikz}
\usetikzlibrary{matrix}
\usepackage{makecell}
\usepackage{diagbox}
\usepackage{enumitem}

\DeclareMathOperator{\Sav}{Sav}

\newcommand\A{\mathcal A}
\renewcommand\S{\mathcal S}
\newcommand\e{\varepsilon}

\newtheorem{theorem}{Theorem}[section]
\newtheorem{lemma}[theorem]{Lemma}
\newtheorem{proposition}[theorem]{Proposition}
\newtheorem{corollary}[theorem]{Corollary}

\theoremstyle{definition}

  \newtheorem{question}[theorem]{Question}
\newtheorem{remark}[theorem]{Remark}
\newtheorem{example}[theorem]{Example}

\newcommand{\todo}[1]{\vspace{2 mm}\par\noindent
   \marginpar{\textsc{ToDo}}\framebox{\begin{minipage}[c]{0.95 \textwidth}
   \tt #1 \end{minipage}}\vspace{2 mm}\par}

\usepackage{authblk}

\title{Permutations that strongly avoid 132}
\date{}
\author[1]{Kassie Archer}
\author[2]{Christina Graves}
\affil[1]{{\small Department of Mathematics, United States Naval Academy, Annapolis, MD, 21402}}
\affil[2]{{\small Department of Mathematics, University of Texas at Tyler, Tyler, TX, 75799}}

\affil[ ]{{\small Email: karcher@usna.edu, cgraves@uttyler.edu}}

\begin{document}

\maketitle

\begin{abstract} A permutation $\pi$ strongly avoids the pattern $\tau$ if both $\pi$ and $\pi^2$ avoid $\tau$. In this paper, we enumerate permutations of size $n$ that strongly avoid the pattern 132. This enumeration allows us to prove a conjecture that the growth rate of such permutations is 2.
\end{abstract}

\noindent {\bf Keywords:} pattern avoidance, strong pattern avoidance, cycle type, Catalan numbers

\section{Introduction}\label{sec:intro}

In \cite{BS2019}, B\'{o}na and Smith introduce the notion of strong pattern avoidance, in which a permutation $\pi$ and its square $\pi^2$ in the symmetric group (both written in one-line notation) both avoid a given pattern $\tau$. In that paper, they show that the number of permutations that strongly avoid the monotone increasing pattern is eventually zero and that the number that avoid 312 (equivalently, 231) is given by the coefficients of the rational generating function 
\[
\Sav_{312}(x) = \frac{1-x-x^2+x^3}{1-2x-x^2+2x^3-x^4}.
\]
They leave open the question of enumerating permutations that strongly avoid 321 or 132 (equivalently 213), though they do provide some bounds. 

In the case of strong 321 avoidance, they bound the sequence below by the number of so-called block cyclic permutations, which have a growth rate of approximately 2.3247. In the case of strong 132 avoidance, they bound the sequence below by 132-avoiding involutions, which have the property that their square is the identity and thus necessarily avoids 132. They state the following question(s). Here, $\Sav_n(132)$ denotes the set of strongly 132-avoiding permutations of size $n$.

\begin{question}[{\cite[Questions 5.1 \& 5.2]{BS2019}}]\label{ques:bona} 
    Is the growth rate of $|\Sav_n(132)|$ equal to 2? Is it true that $|\Sav_n(132)| \leq 2^n$ for all $n$?
\end{question}

The study of strong avoidance continued in \cite{BD2020}, where Burcroff and Defant enumerate permutations strongly avoiding certain pairs of patterns. They also introduce the notion of powerful pattern avoidance in which $\pi^i$ avoids a pattern $\tau$ for all integers $i\geq 1.$ 
In \cite{AG2024}, a variation of strong avoidance called chain avoidance was introduced. In that case, a permutation $\pi$ avoids $\sigma$, while $\pi^2$ avoids $\tau.$ Some other recent results regarding strong and chain avoidance can be found in \cite{BD2020,P23,PG2025}.

As is stated in \cite{BS2019, BD2020}, there appears to be a relationship between permutations that strongly avoid a pattern $\tau$ and the cycle structure of $\tau$-avoiding permutations. In particular, both papers mention the question of enumerating 132-avoiding permutations composed only of $3$-cycles and fixed points, which was later answered in \cite{AG2022, AL2024}. As we will see in this paper, the results from \cite{AG2022} are indeed essential to answering the primary question here.

In this paper, we give the exact enumeration of those permutations that strongly avoid the pattern 132. In the process, we answer Question~\ref{ques:bona}. 
Our primary strategy for approaching this question is to consider those strongly 132-avoiding permutations for which $n$ is in a $k$-cycle. In Section~\ref{sec:lessthan3}, we consider the cases in which $k\leq 3.$  These results rely heavily on previous work done in the field, particularly work by Simion and Schmidt \cite{SS1985} on pattern-avoiding involutions and the authors of this paper \cite{AG2022} on pattern-avoiding permutations composed only of 3-cycles. 
The remainder of the paper focuses on the cases where $k\geq 4$. In these cases, we find that $k$ must divide $n$.

We enumerate permutations of size $n$ that strongly avoid the pattern 132 with the following generating function.
\begin{theorem}\label{thm:main} The number of permutations that strongly avoid the pattern 132 is given by coefficients of the generating function
\[
\Sav_{132}(x)
=\frac{
c\bigl(x^3 c(x^3)\bigr)}{{
2-(1+x)c\bigl(x^3 c(x^3)\bigr)}}\Bigg(\dfrac{1-x}{1-x-x^2 c(x^2)}
\;+\;\dfrac{2x(1+2x)\bigl(c(x^3)-1\bigr)}{1-x^2}\Bigg),
\]
where $c(x)$ is the generating function for the Catalan numbers.
\end{theorem}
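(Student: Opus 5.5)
The plan is to split $\Sav_n(132)$ according to the length $k$ of the cycle containing $n$, as the introduction announces, and to reduce everything to a decomposition of $\pi$ at the position of $n$. Write a 132-avoiding $\pi$ as $\pi=\alpha\, n\, \beta$, where every entry of $\alpha$ exceeds every entry of $\beta$. Squaring interacts badly with this decomposition in general, so the first step is structural. We characterize which such $\pi$ have $\pi^2$ also avoiding 132, in terms of where $\pi^{-1}(n)$ and $\pi(n)$ sit. The aim is a recursive description of the form: $\pi$ equals a ``core'' block containing $n$, together with a smaller strongly 132-avoiding permutation (or a sequence of them) on the remaining entries. We expect the outer factor
\[
\frac{c(x^3c(x^3))}{2-(1+x)c(x^3c(x^3))}
\]
to come from such a recursion. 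The inner quantity $c(x^3c(x^3))$ is, by \cite{AG2022}, the generating function for 132-avoiding permutations composed only of 3-cycles and fixed points (up to the form given there). The denominator $2-(1+x)C$ suggests a sequence construction $1/(1-\text{stuff})$ in which each piece is such a 3-cycle/fixed-point block, possibly preceded by a fixed point, which accounts for the factor $1+x$.

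The second step handles the cases $k\le 3$. If $n$ is a fixed point or lies in a 2-cycle, then $\pi^2$ restricted appropriately behaves like an involution, and we use Simion--Schmidt \cite{SS1985}. The count of 132-avoiding involutions has generating function related to $\frac{1-x}{1-x-x^2c(x^2)}$ (central binomial type), and this is the first summand in the bracket. If $n$ lies in a 3-cycle, we invoke \cite{AG2022} directly, since the relevant part of $\pi$ is forced to consist of 3-cycles and fixed points.

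The third step is the cases $k\ge4$. Here we prove that $k\mid n$ and that $\pi$ is essentially forced: the cycle through $n$ must interleave with everything, giving a very rigid shape, only boundedly many per $k$ up to a Catalan-type choice in $x^3$. Summing over $k$ should produce the second summand
\[
\frac{2x(1+2x)\bigl(c(x^3)-1\bigr)}{1-x^2},
\]
with the $1/(1-x^2)$ reflecting a parity/period-2 repetition and $c(x^3)-1$ a nonempty 3-cycle-type block. Finally we multiply the core generating functions by the outer recursive factor and simplify.

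The main obstacle is the structural step for $k\ge4$, together with the exact gluing rule. We must prove that strong avoidance forces $k\mid n$ and pin down the precise rigid family, and we must verify that the decomposition is a bijection with no double counting between the involution part and the 3-cycle part. We would first test the characterization against small $n$ computed by brute force (e.g. $n\le 8$) before writing the general argument.
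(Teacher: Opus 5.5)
Your split by the length $k$ of the cycle containing $n$ is the same as the paper's. However, the proposal supplies none of the structural lemmas that make the cases countable (you defer them explicitly), and several of your guesses about where the factors come from are wrong. The plan as written would therefore not assemble into the formula. Write $C=c(x^3c(x^3))$.

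\textbf{Cases $k=1,2$.} The fixed-point case has nothing to do with involutions: deleting $n$ gives $a_{n,1}=a_{n-1}$, i.e.\ $a_1(x)=xa(x)$, and this is the source of the $x$ in $2-(1+x)C$. For $k=2$ the needed lemma is that $n$ in a 2-cycle forces $\pi^2=\mathrm{id}$ (not that $\pi^2$ ``behaves like an involution''). Simion--Schmidt then gives $a_{n,2}=\binom{n}{\lfloor n/2\rfloor}-\binom{n-1}{\lfloor (n-1)/2\rfloor}$, whence $1+a_2(x)=\frac{1-x}{1-x-x^2c(x^2)}$.

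\textbf{Case $k=3$.} You cannot quote the 3-cycle enumeration ``directly'', since $\pi$ need not consist of 3-cycles. The missing idea is a peeling lemma. If $n$ lies in $(n,a,b)$ or $(n,b,a)$ with $a<b$, then $\pi$ permutes $[1,a]$, $[b-a+1,b]$, $[n-a+1,n]$ cyclically as $a$ three-cycles. Deleting these leaves an element of $\A_{n-3a}$, and if $a\neq n-b$ the new maximum is again in a 3-cycle. Iterating, $\pi$ is a shell of 3-cycles around a core in $\A_m$ whose maximum is not in a 3-cycle. Hence $a_3(x)=T(x)\,(a(x)-a_3(x))$, where $T(x)=\frac{2C-2}{2-C}$ counts 132-avoiders made only of 3-cycles. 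So $C$ is not that generating function, with or without fixed points: its coefficients of $x$ and $x^3$ are $0$ and $1$, while $1$, $231$, $312$ qualify. The outer factor is $\frac{C}{2-(1+x)C}=\frac{1+T}{1-x(1+T)}$, obtained by solving $a=1+xa+a_2+\frac{T}{1+T}\,a+a_{\ge4}$. Your sequence heuristic is right in spirit, but the blocks are 3-cycle shells counted by $1+T=\frac{C}{2-C}$, not by $C$.

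\textbf{Case $k\ge 4$.} Here $k\mid n$ is a by-product; the count is organized by the position $b$ of $n$, not by $k$. The value $b=n/2$ is impossible, since it forces a 2-cycle. Since $\pi$ strongly avoids 132 iff $\pi^{-1}$ does, and inversion exchanges $b<n/2$ with $b>n/2$, you may take $b>n/2$ at the cost of a factor $2$. The key structural result is that strong avoidance then forces one of two shapes, where $\underline{\alpha^{-1}}$ is $\alpha^{-1}$ shifted up by $n-b$:
\begin{itemize}
\item $b\ge 2n/3$ and $\pi=\underline{\alpha^{-1}}\,(2n-2b+1)\cdots n\,\alpha$ with $\alpha\in\S_{n-b}(132)$;
\item $n/2<b<2n/3$ and $\pi=\underline{\alpha^{-1}}\,(b+1)\cdots n\,\alpha\,(2b-n+1)\cdots(n-b)$ with $\alpha\in\S_{2b-n}(132)$.
\end{itemize}
Conversely, each such $\pi$ lies in $\A_n$ with $n$ in a cycle of length $n/\gcd(n,b)$.

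The real double-counting hazard is in this converse, not between the involution and 3-cycle cases, which are disjoint by definition. At $b=2n/3$ the first shape puts $n$ in a 3-cycle; for $n=12$, $b=8$ one gets $(5,9,\alpha_1)(6,10,\alpha_2)(7,11,\alpha_3)(8,12,\alpha_4)$. These permutations are already counted in $a_3$ and must be discarded. After that, $c(x^3)-1$ records a free nonempty 132-avoider $\alpha$ of size $r$; it is not a ``3-cycle-type block''. In the first family $n\ge 3r+1$, giving the factor $\frac{x}{1-x}$. In the second family $n\ge 3r+2$ and $n\equiv r\pmod 2$, giving the factor $\frac{x^2}{1-x^2}$.

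Without these lemmas and their converses, the proposal remains a heuristic outline rather than a proof.
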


As a corollary, we answer Question \ref{ques:bona} in the affirmative, finding that the growth rate of $|\Sav_n(132)|$ is indeed 2 and that $|\Sav_n(132)|<2^n$ for large enough $n.$

\subsection{Definitions and notation}
Let us first give a few standard definitions and notation. Let $\S_n$ denote the set of permutations of $[n]=\{1,2,\ldots, n\}$. The standard one-line notation of a permutation $\pi \in \S_n$ is given by $\pi = \pi_1\pi_2\cdots\pi_n$ where $\pi_i := \pi(i)$. To avoid complicated subscripts, in this paper we use the notation $\pi_i$ and $\pi(i)$ interchangeably. A permutation $\pi = \pi_1\pi_2\cdots\pi_n$ \emph{avoids} a pattern $\tau$ if there does not exist a subsequence of elements of $\pi$ in the same relative order as $\tau$. We denote the set of permutations in $\S_n$ that avoid a pattern $\tau$ by $\S_n(\tau)$. Furthermore, a permutation $\pi$ \emph{strongly avoids} $\tau$ if both $\pi$ and $\pi^2$ avoid $\tau.$ 

For integers $n_1$ and $n_2$, let $[n_1,n_2]$ denote the set of consecutive integers $\{n_1, n_1+1, \ldots, n_2\}$ when $n_1 \leq n_2$ and the empty set otherwise. Furthermore, if $n_1 \leq n_2$, let $\pi([n_1, n_2]) = \{\pi_{n_1}, \pi_{n_1+1}, \ldots, \pi_{n_2}\}$. The \emph{inverse} of a permutation $\pi$, denoted $\pi^{-1}$, is defined by $\pi_i^{-1}=j$ where $\pi_j=i$.

Let us use $\A_n$ to denote the set of permutations of size $n$ that strongly avoid $132$ (i.e., $\A_n:=\Sav_{132}(n)$) and let $a_n=|\A_n|$. Given a permutation in $\A_n$, we consider its cycle notation and the length of the cycle containing $n$. We will show that the length of the cycle containing $n$ is either 1, 2, 3, or some $k$ dividing $n$. We thus enumerate $\A_n$ by the length of the cycle containing $n$. To this end, let $\A_{n,k}$ denote the set of permutations of size $n$ that strongly avoid $132$ and where $n$ is in a cycle of length $k$. A table providing values of $a_{n,k} = |\A_{n,k}|$ for small $n$ and $k$ is shown in Table~\ref{fig:ank}.

\begin{table}
\begin{center}
\begin{tabular}{l||rrrrrrrrrrrrrrr}
\diagbox{$k$}{$n$}&
\ 1 & 2 & 3 & 4 & 5 & 6 & 7 & 8 & 9 & 10 & 11 & 12 & 13 & 14 & 15\\
\hline \hline
1 & 1 & 1 & 2 & 5 & 12 & 24 & 50 & 101 & 202 & 398 & 806 & 1568 & 3148 & 6198 & 12306 \\
2 & & 1 & 1 & 3 & 4 & 10 & 15 & 35 & 56 & 126 & 210 & 462 & 792 & 1716 & 3033\\
3 & & & 2 & 2 & 4 & 14 & 28 & 56 & 132 & 262 & 524 & 1098 & 2202 & 4316 & 8858\\
4 & & & & 2 & - & - & - & 4 & - & - & - & 10 &-&- & - \\
5 & & & & & 4 & - & - & - & - & 8 & - & - & -&- & 20\\
6 & & & & & & 2 & - & - & - & - & - & 4 & -&- & -\\
7 & & & & & & & 8 & - & - & - & - & - & -&36 & -\\
8 & & & & & & & & 6 & - & - & - & - & -&- & -\\
9 & & & & & & & & & 8 & - & - & - & -&- & -\\
10 & & && & & & & & & 12 & - & - & -&- & -\\
11 & & && & & & & & & & 28 & - & -&- & -\\
12 & & && & & & & & & & & 6 & -&- & -\\
13 & & && & & & & & & & & & 56&- & -\\
14 & & && & & & & & & & & & &40 & - \\
15 & & && & & & & & & & & & & & 36 \\
\hline \hline
Total & 1 & 2 & 5 & 12 & 24 & 50 & 101 & 202 & 398 & 806 & 1568 & 3148 & 6198 & 12306 & 24223
\end{tabular}
\caption{Number of permutations of size $n$ strongly avoiding 132 with $n$ in a cycle of length $k$. This number is denoted by $a_{n,k}$. Note that for $k \geq 4$, $a_{n,k}>0$ only when $k$ divides $n$.}
\label{fig:ank}
\end{center}
\end{table}

\section{$\A_{n,k}$ for $k \leq 3$} \label{sec:lessthan3}

In this section, we enumerate those strongly 132-avoiding permutations where $n$ is in a cycle of size at most 3.

\subsection{$\A_{n,1}$ and $\A_{n,2}$}

Let us first count all permutations of size $n$ that strongly avoid 132 where $n$ is in a 1-cycle, or equivalently, $n$ is in the last position. In this case, it is clear that deleting $n$ results in a strongly avoiding 132 permutation of size $n-1$. Conversely, given a permutation of size $n-1$ that strongly avoids 132, we can add $n$ to the end of the permutation to create one with $n$ in a 1-cycle. We state this in the following proposition.

\begin{proposition}
Let $a_{n,1}$ be the number of permutations in $\A_n$ where $n$ is in a 1-cycle. Then, $a_{1,1} = 1$, and for $n \geq 2,$
\[ a_{n,1} = a_{n-1}.\]
Equivalently, the generating function for $a_{n,1}$ is given by
\[ a_1(x) = xa(x),\]
where $a(x)$ is the generating function for $a_n$.
\end{proposition}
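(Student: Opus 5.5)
The plan is to build an explicit bijection between $\A_{n,1}$ and $\A_{n-1}$ for $n\geq 2$, given by deleting or appending a final fixed point $n$. The base case is immediate: the unique permutation of size $1$ is the fixed point $1$, and both it and its square avoid $132$, so $a_{1,1}=1$.

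For $n \ge 2$, let $\pi\in\A_{n,1}$. Then $\pi_n=n$, so $n$ is a fixed point of $\pi$ and hence also of $\pi^2$. Let $\sigma=\pi_1\cdots\pi_{n-1}\in\S_{n-1}$. Since $\pi$ fixes $n$, its restriction to $[n-1]$ is $\sigma$, and consequently $\sigma^2$ is the restriction of $\pi^2$ to $[n-1]$, i.e.\ $\pi^2 = \sigma^2_1\cdots\sigma^2_{n-1}\,n$. A subsequence of a $132$-avoiding word avoids $132$, so both $\sigma$ and $\sigma^2$ avoid $132$. Thus $\sigma\in\A_{n-1}$.

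Conversely, take $\sigma\in\A_{n-1}$ and set $\pi=\sigma_1\cdots\sigma_{n-1}n$, so that $\pi^2=\sigma^2_1\cdots\sigma^2_{n-1}n$. The only point to check is that appending $n$ at the end creates no $132$. In any occurrence of $132$, the entry playing the role of the $3$ is followed by a smaller entry. The final entry $n$ has nothing after it, so it cannot play the role of the $3$. It also cannot play the role of the $2$, which must be smaller than the $3$, or the role of the $1$, which is the smallest entry of the pattern. Hence any $132$ in $\pi$ or $\pi^2$ would lie entirely within $\sigma$ or $\sigma^2$, which is impossible. So $\pi\in\A_n$, and $n$ lies in a $1$-cycle, giving $\pi\in\A_{n,1}$.

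The two maps are mutually inverse, so $a_{n,1}=a_{n-1}$ for $n \ge 2$. For the generating function, we have
\[
a_1(x)=\sum_{n\ge1}a_{n,1}x^n = x + \sum_{n\ge2}a_{n-1}x^n = x\sum_{m\ge1}a_m x^m + x = x\,a(x),
\]
where the last step uses the convention $a_0=1$, so that $a(x)=1+\sum_{m\ge1}a_mx^m$ and the extra term $x$ is the $m=0$ contribution. This convention is the one needed for the formula to hold. No step is a real obstacle; the only care needed is the observation that squaring commutes with deleting a fixed point.
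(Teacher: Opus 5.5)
Your proof is correct and is the paper's argument: delete the final fixed point $n$, or append it, noting that this commutes with squaring and neither creates nor destroys a 132 pattern. You also make explicit the convention $a_0=1$ needed for $a_1(x)=xa(x)$, which the paper uses only implicitly.
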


Now consider the case where  $n$ is in a 2-cycle. 
We will first prove that if $n$ is in a 2-cycle, then every other element is either in a 2-cycle or is a fixed point. 

\begin{lemma}\label{lem:involution}
    For $n\geq 2$, if $\pi\in\A_n$ and $n$ is in a 2-cycle, then $\pi$ is an involution. 
\end{lemma}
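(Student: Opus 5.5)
The plan is to use the rigid block structure that $132$-avoidance forces on $\pi$ once $n$ sits in a $2$-cycle, and then to use $132$-avoidance of $\pi^2$ to show that $\pi^2$ fixes every point. Write the $2$-cycle as $(j\ n)$, so $\pi_j=n$ and $\pi_n=j$ with $j<n$. Since $\pi$ avoids $132$ and $\pi_j=n$, every entry to the left of position $j$ is larger than every entry to the right of position $j$. There are $j-1$ positions on the left and $n-j$ on the right, and the value $j$ lies on the right. Hence
\[
\pi([1,j-1])=[n-j+1,\,n-1],\qquad \pi([j+1,n])=[1,\,n-j],
\]
and $j\le n-j$.

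Next I would split the positions into three blocks:
\[
A=[1,j-1],\qquad M=[j+1,\,n-j],\qquad T=[n-j+1,\,n-1].
\]
Here $|A|=|T|=j-1$, and $\pi$ maps $A$ bijectively onto $T$. I then want to show that $\pi(T)=A$ and $\pi(M)=M$, together with $\pi(j)=n$ and $\pi(n)=j$. Once this holds, $\pi^2$ preserves $A$, $T$, $M$ and $\{j,n\}$.

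To prove $\pi(T)=A$, I would argue by contradiction using $\pi^2$. We know $\pi^2_j=j$ and $\pi^2_n=n$. Suppose some $t\in T$ has $\pi_t\in M$. Then some $m\in M$ has $\pi_m\in A$. I would take $a=\pi^{-1}(t)\in A$, so that $\pi^2_a=\pi_t>j$. I would then look for a $132$ in $\pi^2$ built from position $j$ or an $A$-position (small value), followed by a large value, then a medium value, choosing among the positions $a$, $m$ and $\pi^{-1}(m)$. The $132$-avoidance of $\pi$ on the right block $[j+1,n]$ should control the relative orders needed. This is the step I expect to be the main obstacle: the case analysis must pin down exactly which triple forms the $132$. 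If a direct pattern is elusive, I would first try showing that $\pi^2$ restricted to positions $[1,j]$ takes values at most $j$. This would follow from $\pi^2_j=j$ together with $132$-avoidance of $\pi^2$ applied around the fixed point $j$.

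With the block decomposition in hand, $\pi$ restricted to $M$, standardized, is a $132$-avoiding permutation whose square also avoids $132$. On $A\cup T$, $\pi$ alternates $A\to T\to A$, and $\pi$ avoids $132$ on these positions. I would show that $\pi|_A$ is decreasing onto $T$: $\pi|_A$ is itself $132$-avoiding, and I would combine this with $\pi^2$ restricted to $A$ avoiding $132$ and with the fact that the values of $\pi$ on $T$ are all smaller than those on $A$. Together with the analogous statement for $\pi|_T$, this should make $\pi^2|_A$ and $\pi^2|_T$ the identity, so every element of $A\cup T$ lies in a $2$-cycle. Finally, $M$ would be handled by strong induction on $n$. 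Either $M$ is empty, or its largest element $n-j$ behaves like the new ``$n$''. I would apply the same block argument with the roles of $j$ and $n$ replaced by $\pi^{-1}$ and the maximum of $M$, repeating the argument above inside $M$, to conclude that $\pi|_M$ is an involution. Hence $\pi$ is an involution.
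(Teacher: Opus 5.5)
Your block decomposition $A=[1,j-1]$, $M=[j+1,n-j]$, $T=[n-j+1,n-1]$ is the right one. The step you flag as the main obstacle actually follows from $\pi$ alone. Since $\pi_n=j$ and $\pi$ avoids $132$, every value below $j$ must appear after every value above $j$ (otherwise small, big, $j$ is a $132$). So $[1,j-1]$ fills the $j-1$ positions just before $n$, giving $\pi(T)=A$ and hence $\pi(M)=M$. Your fallback justification does not work as stated: $132$-avoidance around the fixed point $j$ does not force $\pi^2$ to take values at most $j$ on $[1,j]$, since $321$ avoids $132$ and fixes $2$, yet $3$ precedes it.

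The genuine gap is in the second half. The claim that $\pi|_A$ is decreasing is false. For example, $\pi=456123\in\A_6$ has $6$ in the $2$-cycle $(3\ 6)$, yet $\pi_1\pi_2=45$ is increasing. In fact $\pi|_A$ can be any $132$-avoiding arrangement of the values in $T$, with $\pi|_T$ its inverse. So no monotonicity argument can make $\pi^2$ trivial on $A\cup T$.

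The induction on $M$ does not close either. The standardization of $\pi|_M$ is strongly $132$-avoiding, but nothing you have shown forces its maximum to lie in a $2$-cycle, so the inductive hypothesis does not apply. Restricting to $M$ discards exactly the information that ties $M$ to the fixed point $j$ of $\pi^2$.

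The missing idea is to look at $\pi^2$ to the right of position $j$. Since $\pi$ swaps $A$ and $T$ and preserves $M$, $\pi^2$ maps each of $M$, $T$ and $\{n\}$ to itself. Because $j\le n-j$, all of these values exceed $j$. As $\pi^2_j=j$, $132$-avoidance of $\pi^2$ forces $\pi^2_{j+1}<\pi^2_{j+2}<\cdots<\pi^2_n$. These entries form the set $[j+1,n]$, so $\pi^2$ is the identity on $[j,n]$. This handles $M$ and $T$ at once, with no induction. Finally, for $a\in A$ we have $\pi(a)\in T$, so $\pi(\pi^2(a))=\pi^2(\pi(a))=\pi(a)$, and therefore $\pi^2(a)=a$. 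This is essentially the paper's argument.
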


\begin{proof}
Let $\pi \in \A_n$ with $\pi_r=n$ and $\pi_n = r$. Since $\pi$ is 132-avoiding, and $n$ is in position $r$, the entries before $n$ must be larger than the entries after $n$. Thus, 
\[\pi([1,r-1]) = [n-r+1,n-1],\]
and
\[ \pi([r+1,n]) = [1, n-r]. \]
Since $\pi_n=r,$ such a permutation is only possible if $r \leq n-r$, or $r \leq n/2.$ So we assume for the remainder of the proof that $r \leq n/2$.

We must also have
\[ \pi([n-r+1,n-1]) = [1,r-1]\]
because if $\pi_j > r$ for $j \in [n-r+1, n-1]$, then $\pi_jr$ would be the 32 in some 132 pattern.

Now consider $\pi^2.$ Based on $\pi$, it must be that
\[ \pi^2([1,r-1])= [1,r-1],\]
and
\[ \pi^2([n-r+1,n-1]) = [n-r+1, n-1],\] and therefore
\[ \pi^2([r+1,n-r])= [r+1, n-r].\]
Since $r \leq \lfloor n/2 \rfloor$, all elements in $[n-r+1,n-1]$ are greater than $r$, and thus all elements appearing after $r$ in $\pi^2$ are larger than $r$. These elements must appear in increasing order to avoid a 132 pattern starting with $r$. Thus
\[ \pi^2 = \pi^2_1\pi^2_2\cdots\pi_{r-1}^2\  r\  (r+1) \cdots  n.\]
Additionally, since $\pi([n-r+1,n-1]) = [1,r-1]$ and $\pi^2_j=j$ for all $n-r+1\leq j \leq n-1$, these elements are all in transpositions and so $\pi^2_j=j$ for $1\leq j \leq r-1$ as well. Since we have shown that $\pi^2$ is the identity permutation, the proof is complete. 
\end{proof}

The number of involutions avoiding the pattern 132 was given in 1985 by Simion and Schmidt \cite{SS1985}, as stated in the following lemma. 
\begin{lemma}[{\cite[Prop. 5]{SS1985}}]
    \label{lem:number-involutions}
    For $n\geq 1$ the number of involutions avoiding the pattern 132 is given by 
    \[
    \binom{n}{\lfloor n/2\rfloor}.
    \]
\end{lemma}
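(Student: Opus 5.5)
We aim to show that the number of 132-avoiding involutions of $[n]$ is $\binom{n}{\lfloor n/2\rfloor}$. The plan is a decomposition by the position of $n$, followed by a bijection with lattice paths. Let $\iota_n$ denote this number.

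\textbf{Decomposition.} Let $\pi$ be a 132-avoiding involution of $[n]$. If $\pi_n=n$, deleting $n$ gives a 132-avoiding involution of $[n-1]$, and conversely. Otherwise $\pi_r=n$ and $\pi_n=r$ for some $r<n$. In this case the argument in the proof of Lemma~\ref{lem:involution} applies verbatim and gives the following.
\begin{itemize}
\item $r\le n/2$.
\item $\pi$ maps $[1,r-1]$ onto $[n-r+1,n-1]$ and maps $[n-r+1,n-1]$ onto $[1,r-1]$.
\item The middle block $[r+1,n-r]$ is mapped to itself.
\end{itemize}
Avoiding 132 forces the entries in positions $1,\dots,r-1$ to be decreasing. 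Otherwise some $a<b$ there, followed later by $r$, which is smaller than both and appears after them, together with $n$, gives a 132 pattern. Hence $\pi_i=n-i$ for $i<r$, and by the involution property $\pi_{n-i}=i$. So $\pi$ is determined by $r$ and by a 132-avoiding involution of the middle block of size $n-2r$. Conversely, any such choice gives a 132-avoiding involution. The check is that the outer part is an anti-diagonal frame whose values sit above or below the middle values in the correct way, so no 132 can use frame entries. This yields the recurrence
\[
\iota_n=\iota_{n-1}+\sum_{r=1}^{\lfloor n/2\rfloor}\iota_{n-2r},\qquad \iota_0=1 .
\]

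\textbf{Solving the recurrence.} Subtracting the recurrence for $\iota_{n-2}$ gives
\[
\iota_n-\iota_{n-2}=\iota_{n-1}-\iota_{n-3}+\iota_{n-2}.
\]
Equivalently, the generating function satisfies
\[
I(x)=\frac{1-x^2}{1-x-2x^2+x^3}\cdot(\text{correction}),
\]
and we would compare this with the known generating function of the central binomial coefficients $\binom{n}{\lfloor n/2\rfloor}$. A cleaner route is to verify directly that $b_n=\binom{n}{\lfloor n/2\rfloor}$ satisfies $b_n=b_{n-1}+\sum_{r\ge1}b_{n-2r}$ with matching initial values $1,1,2,3,6$. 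Split by parity of $n$ and use $b_{2m}=2b_{2m-1}$ and $b_{2m+1}=\binom{2m+1}{m}$. The sum $\sum_r b_{n-2r}$ telescopes using the identity $\binom{2m+1}{m}-\binom{2m-1}{m-1}=\dots$.

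\textbf{Main obstacle.} The expected obstacle is this binomial verification. If the telescoping becomes messy, we would instead use a bijection. Write the involution as a sequence of nested or unnested arcs, and encode it as a path of $n$ up and down steps that never goes below the axis. Fixed points become unmatched up steps and transpositions become matched pairs. Such paths are counted by $\binom{n}{\lfloor n/2\rfloor}$ by the standard reflection argument. The decomposition above is exactly the first-return decomposition of these paths, which matches the recurrence.
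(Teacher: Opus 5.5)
\paragraph{The gap.} The paper gives no proof of this lemma; it quotes Simion and Schmidt. Your argument therefore has to stand on its own, and it breaks at the claim that ``avoiding 132 forces the entries in positions $1,\dots,r-1$ to be decreasing.'' This claim is false, and your justification does not actually produce a 132. If $a<b$ occur in that order before position $r$, the available triples $a\,b\,n$, $a\,b\,r$ and $a\,n\,r$ have patterns 123, 231 and 231. A counterexample is $\pi=456123$: it is a 132-avoiding involution with $\pi_3=6$ and $\pi_6=3$, and its first two entries increase.

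As a result your recurrence undercounts as soon as the frame has two or more entries, which first happens at $n=6$. It gives $\iota_6=\iota_5+\iota_4+\iota_2+\iota_0=19$, whereas $\binom{6}{3}=20$. Equivalently, it predicts 9 involutions with $6$ in a 2-cycle, while the paper's table has $a_{6,2}=10$. Your initial-value check ($n\le 4$, and $n=5$ would also agree) cannot detect this, because for $n\le 5$ the frame has at most one entry. So the binomial verification you left unfinished could not succeed. The fallback has the same problem. The arcs of a 132-avoiding involution can cross ($456123$ has three pairwise crossing arcs), so no encoding into paths has actually been specified. Also, the last-return decomposition of nonnegative paths does \emph{not} match your recurrence, as explained below.

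\paragraph{The fix.} The correct statement is $\pi_i=n-r+\sigma_i$ for $i\in[1,r-1]$, with $\sigma$ an \emph{arbitrary} element of $\S_{r-1}(132)$. The involution property then forces $\pi_{n-r+j}=\sigma^{-1}_j$ for $j\in[1,r-1]$. Ordered by position, the blocks are: first block, $n$, middle, last block, $r$. Ordered by value, they are: last block $<r<$ middle $<$ first block $<n$. From these two orders one checks case by case (on where the ``1'' sits) that every occurrence of 132 lies inside a single block. Since $\sigma^{-1}$ avoids 132 if and only if $\sigma$ does, this gives
\[
\iota_n=\iota_{n-1}+\sum_{r=1}^{\lfloor n/2\rfloor}c_{r-1}\,\iota_{n-2r},\qquad\text{i.e.}\qquad I(x)=\frac{1}{1-x-x^2c(x^2)}.
\]
This is exactly the function $d(x)$ that the paper quotes, in the proof of the proposition on $a_{n,2}$, as the generating function of $\binom{n}{\lfloor n/2\rfloor}$.

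To finish, you have two options.
\begin{itemize}
\item Simplify $I(x)$ to $\frac{1}{2x}\bigl(\sqrt{(1+2x)/(1-2x)}-1\bigr)$.
\item Use nonnegative $\pm1$ paths of length $n$. Such a path either ends with an up step, $P'U$, or decomposes uniquely as $P'\,U\,Q\,D$ with $Q$ a Dyck path of length $2r-2$. This reproduces the corrected recurrence, including the Catalan factor $c_{r-1}$ that yours was missing. The reflection principle then counts these paths by $\binom{n}{\lfloor n/2\rfloor}$.
\end{itemize}
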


Using these two lemmas, we can obtain the following result. 

    \begin{proposition} Let $n \geq 2$, and let $a_{n,2}$ be the number of permutations in $\A_n$ where $n$ is in a transposition. Then
\[ a_{n,2} = {n-1 \choose \lfloor \frac{n-2}{2} \rfloor}.\]
Equivalently, the generating function for $a_{n,2}$ is given by 
\[ a_2(x) = \frac{x^{2} c(x^{2})}{1 - x - x^{2} c(x^{2})}\]
where $c(x)$ is the generating function for the Catalan numbers.
\end{proposition}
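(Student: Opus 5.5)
By Lemma~\ref{lem:involution}, every $\pi\in\A_n$ with $n$ in a 2-cycle is an involution. Conversely, every 132-avoiding involution $\pi$ has $\pi^2$ equal to the identity, which avoids 132, so $\pi$ strongly avoids 132. Hence $\A_{n,2}$ is exactly the set of 132-avoiding involutions of size $n$ in which $n$ is not a fixed point.

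I would count these by complementation. A 132-avoiding involution with $\pi_n=n$ is the same thing as a 132-avoiding involution of size $n-1$ with $n$ appended at the end. By Lemma~\ref{lem:number-involutions},
\[
a_{n,2}=\binom{n}{\lfloor n/2\rfloor}-\binom{n-1}{\lfloor (n-1)/2\rfloor}.
\]
Pascal's rule then gives the closed form, with a two-case parity check:
\begin{itemize}[label={}]
\item For $n=2m$: $\binom{2m}{m}-\binom{2m-1}{m-1}=\binom{2m-1}{m}=\binom{2m-1}{m-1}$, which matches since $\lfloor (n-2)/2\rfloor=m-1$.
\item For $n=2m+1$: $\binom{2m+1}{m}-\binom{2m}{m}=\binom{2m}{m-1}$, which matches since $\lfloor (n-2)/2\rfloor=m-1$.
\end{itemize}

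For the generating function, let $I(x)=1+\sum_{n\ge1}\binom{n}{\lfloor n/2\rfloor}x^n$. I would use (and briefly justify) the standard identity
\[
I(x)=\frac{c(x^2)}{1-xc(x^2)}.
\]
One way to justify it is to encode $\binom{n}{\lfloor n/2\rfloor}$ as the number of lattice paths with $n$ up/down steps that never go below $0$. Decomposing such a path at its last visits to each level gives a Dyck path, followed by a sequence of pieces each consisting of an up step and then a Dyck path. This matches $c(x^2)\cdot 1/(1-xc(x^2))$. Alternatively, one can check the identity against known coefficients.

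The complement count translates to $a_2(x)=I(x)-1-xI(x)$. Writing $c=c(x^2)$, this equals
\[
\frac{c(1-x)-(1-xc)}{1-xc}=\frac{c-1}{1-xc}=\frac{x^2c^2}{1-xc},
\]
using $c-1=x^2c^2$. It remains to see that this equals $\frac{x^2c}{1-x-x^2c}$. Cross-multiplying, this reduces to
\[
c(1-x-x^2c)=1-xc,
\]
that is, $c-x^2c^2=1$, which is the Catalan functional equation.

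The only step that is not routine is establishing the generating function $I(x)$ for central binomials. I would do this via the path bijection above, and otherwise cite it as standard.
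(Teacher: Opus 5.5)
Your proof is correct and follows the paper's argument: Lemma~\ref{lem:involution} together with the Simion--Schmidt count, subtracting the 132-avoiding involutions that fix $n$, and then taking $a_2(x)=(1-x)I(x)-1$. The differences are only in presentation. You state the converse (every 132-avoiding involution strongly avoids 132) and the Pascal-rule identity explicitly, and you justify the central binomial generating function with a lattice-path decomposition in the equivalent form $c(x^2)/(1-xc(x^2))$, where the paper cites the OEIS form $1/(1-x-x^2c(x^2))$.
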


\begin{proof}
    By Lemma~\ref{lem:involution}, $a_{n,2}$ is exactly the number of involutions of length $n$ that avoid 132 where $n$ is not a fixed point. By Lemma~\ref{lem:number-involutions}, there are $\binom{n}{\lfloor n/2\rfloor}$ involutions total, and thus $\binom{n-1}{\lfloor (n-1)/2\rfloor}$ that have $n$ as a fixed point. Thus the total number of strongly 132-avoiding permutations with $n$ in a 2-cycle is
    \[
    \binom{n}{\lfloor n/2\rfloor}-\binom{n-1}{\lfloor (n-1)/2\rfloor} = {n-1 \choose \lfloor \frac{n-2}{2} \rfloor}.
    \]

    It is well-known that the generating function for $\binom{n}{\lfloor n/2\rfloor}$ is \[d(x) = \frac{1}{1 - x - x^2c(x^2)}\] where $c(x)$ is the generating function for the Catalan numbers
    (see for example, OEIS \cite[A001405]{OEIS}). Thus, $a_2(x) = d(x)-xd(x)-1,$ which can be simplified to the generating function given in the statement of the theorem.
\end{proof}

\subsection{$\A_{n,3}$}

We now consider permutations that strongly avoid 132 where $n$ is in a 3-cycle. We begin by noting that if a permutation avoids 132 and is comprised \emph{only} of 3-cycles, then it strongly avoids 132. This is because if $\pi$ is composed only of $3$-cycles, then $\pi^2=\pi^{-1}$ and $\pi$ avoids 132 if and only if $\pi^{-1}$ does. In \cite{AG2022}, the authors enumerate such permutations. We restate the result here (in a different, but equivalent way) in the context of this current paper.

\begin{lemma}[{\cite[Theorem~3.12]{AG2022}}] \label{lem:3cycles} The number of permutations in $\A_n$ comprised of only 3-cycles is given by the generating function
\[b(x) = \frac{2c(x^3c(x^3)) - 2}{2-c(x^3c(x^3))} \]
where $c(x)$ is the generating function for the Catalan numbers.
\end{lemma}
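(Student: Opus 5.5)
Since this lemma is a restatement of \cite[Theorem~3.12]{AG2022}, the proof has two parts. First we check that the objects agree. Then we check that the generating function given there matches the closed form $b(x)$ stated here.

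For the first part, suppose $\pi\in\S_n$ consists only of 3-cycles. Then $\pi^3$ is the identity, so $\pi^2=\pi^{-1}$. The inverse of a permutation is obtained by reflecting its permutation matrix across the main diagonal. This reflection sends the pattern 132 to its own inverse, and $132^{-1}=132$. Hence $\pi$ avoids 132 if and only if $\pi^{-1}=\pi^2$ does. So the permutations in $\A_n$ made only of 3-cycles are exactly the 132-avoiding permutations made only of 3-cycles, which is the family counted in \cite{AG2022}. This part is immediate.

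The second part is the algebraic translation. I would recall the form of the result in \cite{AG2022}. There such a permutation is decomposed in the usual 132-avoiding way around the position of $n$ into a sequence of irreducible "blocks". The block generating function is expressed through Catalan-type objects counted by $x^3c(x^3)$, namely a ternary or Catalan structure in which each 3-cycle contributes $x^3$. This gives a composition of the form $c(x^3c(x^3))$.

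Write $C=c(x^3c(x^3))$. If the irreducible-block generating function is $g$, the total count including the empty permutation is $1/(1-g)$. We need
\[
b(x)+1=\frac{C}{2-C},
\]
which holds exactly when $g=2(C-1)/C=2-2/C$. Using the Catalan identity $C=1+yC^2$ with $y=x^3c(x^3)$, we get $1-1/C=yC$. So the condition becomes $g=2yC=2x^3c(x^3)\,c(x^3c(x^3))$. This says each irreducible block comes in two mirror-type versions, each built from a Catalan structure substituted into $x^3c(x^3)$.

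The plan is therefore to match the form of Theorem~3.12 of \cite{AG2022} to $g=2x^3c(x^3)C$, or equivalently to $b=(2C-2)/(2-C)$, by direct algebraic manipulation using $c(t)=1+tc(t)^2$. The main obstacle is purely bookkeeping. The original theorem may be stated either recursively or as an explicit sum, and it may include or exclude the empty permutation. I would first try to rewrite it into the form $1/(1-g)$, and then verify the identity by comparing the first several coefficients, for example $n=3$ and $n=6$, against Table~\ref{fig:ank}.
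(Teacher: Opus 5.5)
Your proposal matches the paper, which likewise justifies this lemma only by noting that a permutation made of 3-cycles satisfies $\pi^2=\pi^{-1}$ (so, since $132^{-1}=132$, ordinary 132-avoidance already gives strong avoidance) and then quoting \cite[Theorem~3.12]{AG2022} in an algebraically equivalent form. Your reformulation $b+1=C/(2-C)$, i.e.\ $g=2x^3c(x^3)\,C$, is correct.

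One caution about your sanity check: the $k=3$ row of Table~\ref{fig:ank} counts permutations with $n$ in a 3-cycle, not permutations made only of 3-cycles. For example, $a_{6,3}=14$ while $[x^6]\,b(x)=8$, and $a_{4,3}=2$ while $[x^4]\,b(x)=0$. So you would have to extract $b_n$ via $a_3=b\,(a-a_3)$ (indeed $14=2\cdot 3+8$). Also, such a finite coefficient check only supplements the algebraic identity; it does not replace it.
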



We will ultimately show that for $\pi\in\A_n$, if $n$ is in a 3-cycle, then $\pi$ is actually composed of an ``outer layer'' of 3-cycles surrounding another strongly 132-avoiding permutation. Since these 3-cycles can then be deleted, this makes this case very amenable to counting. 

Let us start by showing that if $n$ is a 3-cycle of the form $(n,a,b)$ with $a<b$, then all elements between $b$ and $n$ are also in 3-cycles.

\begin{lemma}\label{lem:after n}
    Suppose $n\geq 3$ and $\pi\in \A_n$. If $n$ is part of the 3-cycle $(n,a,b)$ with $a<b$, then the elements in $[b+1, n-1]$ are each in a 3-cycle as well. 
\end{lemma}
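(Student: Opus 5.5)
The plan is to read off the block structure that 132-avoidance forces on both $\pi$ and $\pi^2$, exactly as in the proof of Lemma~\ref{lem:involution}. Then I would chase the elements of $[b+1,n-1]$ through three applications of $\pi$ and show they come back to $[b+1,n-1]$.

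Write the cycle as $\pi_n=a$, $\pi_a=b$, $\pi_b=n$, with $a<b<n$.

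\textbf{Step 1: block structure of $\pi$.} Since $n$ sits in position $b$ of $\pi$, every entry before position $b$ exceeds every entry after it. Hence
\[
\pi([1,b-1])=[n-b+1,n-1], \qquad \pi([b+1,n])=[1,n-b].
\]
In particular $\pi_a=b\ge n-b+1$ and $\pi_n=a\le n-b$.

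\textbf{Step 2: block structure of $\pi^2$.} In $\pi^2$ we have $\pi^2_a=n$, $\pi^2_n=b$ and $\pi^2_b=a$. So $n$ sits in position $a$ of $\pi^2$, and 132-avoidance of $\pi^2$ gives
\[
\pi^2([1,a-1])=[n-a+1,n-1], \qquad \pi^2([a+1,n])=[1,n-a],
\]
together with $b\le n-a$.

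\textbf{Step 3: follow $S=[b+1,n-1]$ under $\pi$.} By Step~1, $\pi(S)\subseteq[1,n-b]\setminus\{a\}$. The first substep is to show $\pi(S)\subseteq[1,a-1]$. Suppose $j\in S$ had $\pi_j>a$. I would look for a 132 in $\pi^2$: the entries $\pi^2_a=n$ and $\pi^2_n=b$ already give the ``3'' and ``2'' of a potential pattern, so the task is to locate a suitable small entry before them. Failing that, I would look for a 132 in $\pi$ formed from the trailing block together with $\pi_n=a$.

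Once $\pi(S)\subseteq[1,a-1]$ is known, Step~2 gives $\pi^2(\pi(S))\subseteq[n-a+1,n-1]\subseteq S$, where the inclusion uses $b\le n-a$. Thus $\pi^3(S)\subseteq S$.

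\textbf{Step 4: upgrade to 3-cycles.} It remains to show that $\pi^3$ is the identity on $S$, not merely a permutation of $S$. The counting consequence of Step~3 is that $|S|\le a-1$. I would then compare the sets $S$, $T=\pi(S)$ and $U=\pi(T)$. Here $U$ lies in $[n-b+1,n-1]$, the values in the first block. Using the increasing/decreasing constraints, the entries of $\pi$ in positions $S$ must themselves avoid $1$-$2$ with anything after them, and likewise for the corresponding entries in $\pi^2$. I would argue by downward induction on $j\in S$, starting from $j=n-1$. At each step, I would show that the largest remaining element of $S$ must be sent by $\pi^3$ to itself, since any other choice creates a 132 in $\pi$ or in $\pi^2$ involving $a$, $b$ or $n$.

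I expect the main obstacle to be the first substep of Step~3, proving $\pi(S)\subseteq[1,a-1]$, together with the final identification $\pi^3|_S=\mathrm{id}$. Both require finding explicit 132 occurrences in $\pi^2$, whose one-line form is only indirectly controlled. For these I would first try patterns built from the known entries $\pi^2_a=n$, $\pi^2_b=a$ and $\pi^2_n=b$.
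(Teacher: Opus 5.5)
There is a genuine gap, and it is in the substep you flagged yourself. The claim $\pi(S)\subseteq[1,a-1]$ for $S=[b+1,n-1]$ is false in general, so no 132 pattern can be found to justify it. Take $\pi=4\,3\,5\,6\,2\,1=(1,4,6)(2,3,5)$. It lies in $\A_6$, since both $\pi$ and $\pi^2=\pi^{-1}=6\,5\,2\,1\,3\,4$ avoid 132. Here $6$ is in the 3-cycle $(6,1,4)$, so $a=1$ and $b=4$. Yet $S=\{5\}$ and $\pi_5=2\notin[1,a-1]=\emptyset$.

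Your consequence $|S|\le a-1$ is equivalent to $a+b\ge n$, whereas only $a+b\le n$ holds. So your argument could at best cover the boundary case $a+b=n$.

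The missing idea is to use $\pi_n=a$ in $\pi$ itself. You mention this only as a fallback, and it actually refutes your claim rather than supporting it. All values above $a$ must precede all values below $a$, which gives $b\le n-a$, $\pi([n-a+1,n-1])=[1,a-1]$, and $\pi([b+1,n-a])=[a+1,n-b]$. So $S$ splits into two blocks:
\begin{itemize}
\item $C=[n-a+1,n-1]$, where your Step~3 reasoning does work;
\item $D=[b+1,n-a]$, whose image lies \emph{above} $a$ and needs a separate argument.
\end{itemize}
For $D$: since $\pi^2_n=b$, the values above $b$ fill positions $[1,n-b]$ of $\pi^2$. Together with $\pi^2([1,a])=[n-a+1,n]$, this gives $\pi^2([a+1,n-b])=[b+1,n-a]$, hence $\pi^3(D)=D$.

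Your Step~4 is also only a placeholder. What upgrades $\pi^3(X)=X$ to $\pi^3|_X=\mathrm{id}$ is monotonicity, not an induction over unspecified 132 patterns.

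First pin down $\pi([1,a-1])=[b-a+1,b-1]$. For $i<a$ we have $\pi(\pi_i)\ge n-a+1>n-b$, so $\pi_i\notin[b+1,n]$, and hence $\pi_i<b$. Then 132-avoidance around $\pi_a=b$ forces these to be the $a-1$ values just below $b$. It follows that $\pi([b-a+1,b-1])=[n-a+1,n-1]$.

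Two monotonicity facts now follow:
\begin{itemize}
\item $1$ occurs in $\pi^2$ no later than position $b$, so $\pi^2$ is increasing on positions $[b,n]$;
\item $\pi$ is increasing on any set of positions after $a$ carrying values above $b=\pi_a$.
\end{itemize}
Write $\pi^3=\pi\circ\pi^2$. On $C$ we have $\pi^2(C)=[b-a+1,b-1]$, and on $D$ we have $\pi^2(D)=\pi^{-1}(D)\subseteq[a+1,b-1]$. In both cases $\pi^3$ is a composition of two increasing bijections carrying the block back onto itself, hence the identity. This is essentially how the paper argues.
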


\begin{proof}
    Suppose $\pi$ contains the cycle $\pi=(n,a,b)$ with $a<b.$ Then 
    \[\pi = \pi_1\ldots \pi_{a-1} b \pi_{a+1}\ldots \pi_{b-1} n \pi_{b+1} \ldots \pi_{n-1} a.\]
    Let us first note that since $\pi$ avoids 132 and $\pi_n=a$, we must have that all elements larger than $a$ must appear before all elements smaller than $a.$ Since $\pi_b=n$, this means that $b\leq n-a$. Additionally, we have that 
    \[
    \pi([n-a+1,n-1]) = [1,a-1]. 
    \]
    If $b<n-a$, we must also have
    \[
    \pi([b+1,n-a]) = [a+1, n-b] 
    \]
    since all elements to the right of $\pi_b=n$ must be smaller than all elements to the left of $n$.
    Next, notice that 
    \[\pi([1,a-1]) = [b-a+1, b-1],\]
    that is, the first $a-1$ elements of $\pi$ consist of the largest elements that are less than $b$. If this were not so, then if $\pi([1,a-1])$ had at least one element less than $b$, $\pi_a=b$ would be the ``3'' in a 132 pattern. If each element of $\pi([1,a-1])$ were greater than $b$, then $1\pi^2_{n-1}\pi^2_n=1\pi_{\pi_{n-1}}b$ would be a 132 pattern in $\pi^2$ since $\pi_{n-1}<a$.

    Now let us consider $\pi^2$, which also avoids $132.$ We have 
    \[\pi^2=\pi^2_1\ldots \pi^2_{a-1} n \pi^2_{a+1}\ldots \pi^2_{b-1} a \pi^2_{b+1} \ldots \pi^2_{n-1} b. \]
    Since all elements that appear before $n$ must be larger than those that appear after $n$, we must have 
    \[\pi^2([1,a-1]) = [n-a+1,n-1],\]
    which implies that 
    \[
    \pi([b-a+1,b-1]) = [n-a+1,  n-1].
    \]
    Thus the sets $A = [1, a-1]$, $B = [b-a+1,  b-1]$, and $C = [n-a+1,  n-1]$ map to each other under $\pi$. That is, $\pi(A) = B, \pi(B)=C,$ and $\pi(C) = A.$ 
    Note this implies that in $\pi^2,$ the element 1 appears before the element $a$ and thus $\pi^2_b<\pi^2_{b+1}<\ldots<\pi_n^2$ since $\pi^2$ avoids 132. In particular, this means that $\pi^2_{n-a+1}\ldots\pi_{n-1}^2=(b-a)\ldots (b-1).$ Additionally, in $\pi,$ $\pi_{b-a}\ldots \pi_{b-1} = (n-a+1)\ldots (n-1)$ is increasing since otherwise, $b$ would be the 1 in a 132 pattern. Taken together, this implies that $\pi^3_{n-a+1}\ldots\pi_{n-1}^3=(n-a+1)\ldots (n-1),$ and so all elements in $A, B,$ and $C$ are in 3-cycles. 

    It remains to show that the elements in $[b+1,  n-a]$ 
    are each in a 3-cycle as well. Note that 
    \[\pi([{b+1}, {n-a}]) = [a+1,  n-b]\]
    since $\pi$ avoids 132 and that
    \[\pi^2([a+1,{n-b}]) = [b+1,  n-a]\]
    since $\pi^2$ avoids 132 and ends with $b$ (thus implying the first $n-b$ elements of $\pi^2$ are greater than $b$, and we have already determined the elements in positions 1 through $a$).
    For identical reasons as above, we can determine that $\pi^3_{a+1}\ldots\pi^3_{n-a}=(a+1)\ldots (n-a),$ and so the elements $[a+1, n-a]$ as well as $[b+1,  n-a]$ are in 3-cycles. 
\end{proof}

\begin{remark}\label{rem:inverse}
    A permutation $\pi\in\S_n$ strongly avoids 132 if and only if $\pi^{-1}$ strongly avoids 132.
\end{remark}

\begin{corollary}\label{cor:after n}
    Suppose $n\geq 3$ and $\pi\in \A_n$. If $n$ is part of the 3-cycle $(n,b,a)$ with $a<b$, then the elements in $[b+1,  n-1]$ are each in a 3-cycle as well. 
\end{corollary}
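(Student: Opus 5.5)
The plan is to deduce the corollary from Lemma~\ref{lem:after n} by passing to the inverse permutation, using Remark~\ref{rem:inverse}.

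First, fix the cycle convention used in Lemma~\ref{lem:after n}. There, the cycle $(n,a,b)$ means $\pi(n)=a$, $\pi(a)=b$, and $\pi(b)=n$; this matches the one-line form displayed in that proof, where $\pi_n=a$, $\pi_a=b$, $\pi_b=n$. In the corollary, $\pi$ contains the cycle $(n,b,a)$, so $\pi(n)=b$, $\pi(b)=a$, $\pi(a)=n$. Inverting these relations gives $\pi^{-1}(n)=a$, $\pi^{-1}(a)=b$, $\pi^{-1}(b)=n$. Hence $\pi^{-1}$ contains the 3-cycle $(n,a,b)$ with $a<b$, which is exactly the hypothesis of Lemma~\ref{lem:after n}.

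Next, by Remark~\ref{rem:inverse}, $\pi^{-1}$ strongly avoids 132, so $\pi^{-1}\in\A_n$. (This holds because $(\pi^{-1})^2=(\pi^2)^{-1}$, and $\sigma$ avoids 132 if and only if $\sigma^{-1}$ does, since 132 is an involution.) Lemma~\ref{lem:after n} applied to $\pi^{-1}$ then shows that every element of $[b+1,n-1]$ lies in a 3-cycle of $\pi^{-1}$.

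Finally, $\pi$ and $\pi^{-1}$ have the same cycles as sets: each cycle of $\pi^{-1}$ is the corresponding cycle of $\pi$ traversed in reverse. So every element of $[b+1,n-1]$ also lies in a 3-cycle of $\pi$, which completes the argument. There is no real obstacle here; the only point requiring care is checking the cycle-notation convention, so that inversion sends $(n,b,a)$ to $(n,a,b)$ with the same $a<b$ and the same interval $[b+1,n-1]$.
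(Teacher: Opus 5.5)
Your proposal is correct and follows the paper's own proof: pass to $\pi^{-1}$, which lies in $\A_n$ by Remark~\ref{rem:inverse} and contains the 3-cycle $(n,a,b)$. Then apply Lemma~\ref{lem:after n} and use that $\pi$ and $\pi^{-1}$ have the same cycles as sets. Your explicit check of the cycle convention and your justification of the remark via $(\pi^{-1})^2=(\pi^2)^{-1}$ are both accurate.
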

\begin{proof}
    This follows immediately from Lemma~\ref{lem:after n} and Remark~\ref{rem:inverse} by considering the inverse permutation. The inverse of a permutation can be obtained by reversing all of its cycles and so $(n,a,b)$ is a 3-cycle in $\pi^{-1}.$ By Lemma~\ref{lem:after n}, we know the elements in $[b+1, n-1]$ are all in 3-cycles in $\pi^{-1}$, and thus must be in $\pi$ as well.
\end{proof}

\begin{theorem} If $a_3(x)$ is the generating function for permutations in $\A_n$ with $n$ in a 3-cycle, then
    \[a_3(x)  = \left[2-\frac{2}{c(x^3c(x^3))}\right]a(x)\] where $a(x)$ is the generating function for permutations that strongly avoid 132. 
\end{theorem}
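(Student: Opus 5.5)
The plan is to show that every $\pi\in\A_n$ with $n$ in a 3-cycle decomposes uniquely as an ``outer layer'' $\tau$ of 3-cycles with an arbitrary strongly 132-avoiding permutation $\sigma$ inserted inside it. From this I expect $a_3(x)=u(x)\,a(x)$, where $u(x)$ counts the possible outer layers. I would then show $u(x)=2-2/c(x^3c(x^3))$ by comparing with Lemma~\ref{lem:3cycles}.

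\emph{Structure.} Let $n$ lie in $(n,a,b)$ with $a<b$; the case $(n,b,a)$ reduces to this one by Remark~\ref{rem:inverse} and Corollary~\ref{cor:after n}.
\begin{itemize}
\item The proof of Lemma~\ref{lem:after n} gives three facts. The blocks $A=[1,a-1]$, $B=[b-a+1,b-1]$ and $C=[n-a+1,n-1]$ are permuted cyclically. The elements of $[a+1,n-a]$ and of $[b+1,n-a]$ are in 3-cycles. And $\pi([b+1,n-a])=[a+1,n-b]$.
\item The elements not yet accounted for form some set of positions. I would show that this set is a single interval $I=[p,q]$ with $\pi(I)=I$.
\item The natural candidate for $I$ comes from iterating the argument of Lemma~\ref{lem:after n}. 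The outer 3-cycles ``peel off'' from both ends. The peeling stops at the first stage where the remaining middle block no longer has its maximum in a 3-cycle that is forced by the outer structure.
\end{itemize}
Once $I$ is identified, define $\sigma=\pi|_I$ (standardized) and let $\tau$ be $\pi$ with $I$ collapsed. Here $\tau$ is a permutation consisting only of 3-cycles, which is 132-avoiding and, since its square is its inverse, strongly 132-avoiding.

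\emph{Bijection.} Conversely, take any $\sigma\in\A_m$ and any admissible outer layer $\tau$. I would show that inserting $\sigma$ into the designated slot of $\tau$ yields a permutation in $\A_{n}$ with $n$ in a 3-cycle.
\begin{itemize}
\item Both $\pi$ and $\pi^2$ have the block form (entries of $\tau$ or $\tau^2$) $\oplus/\ominus$ (the inflated interval).
\item A 132 occurrence would therefore have to use entries both inside and outside the interval. The shape of the outer layer, with its decreasing block arrangement before $n$ and increasing runs after it as in Lemma~\ref{lem:after n}, rules such occurrences out.
\end{itemize}
This gives $a_3(x)=u(x)a(x)$.

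\emph{Computing $u$.} A permutation counted by $b(x)$ in Lemma~\ref{lem:3cycles} has $n$ in a 3-cycle. Applying the decomposition to it gives an outer layer and an inner permutation that is again composed only of 3-cycles, possibly empty. Iterating, every such permutation is uniquely a finite sequence of outer layers. Hence $1+b(x)=1/(1-u(x))$. Writing $C=c(x^3c(x^3))$, we have $1+b=C/(2-C)$, so
\[
u = 1-\frac{2-C}{C} = 2-\frac{2}{C},
\]
as claimed.

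\emph{Main obstacle.} The hard step is the structural claim: pinning down $I$ precisely enough that it is a genuine interval, invariant under $\pi$, and that \emph{any} strongly 132-avoiding $\sigma$ may be placed there. Elements of $I$ might interact with the blocks $A,B,C$ in $\pi^2$. I would first try an induction on the number of peeled layers. At each stage, I would apply the argument of Lemma~\ref{lem:after n} to the middle block, using its maximum in the role of $n$.
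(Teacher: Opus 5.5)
Your generating-function bookkeeping is correct but organized differently from the paper's. The paper peels \emph{every} outer layer of 3-cycles and stops only when the maximum of the remainder is not in a 3-cycle. This writes $\pi$ as an arbitrary nonempty all-3-cycle permutation $\alpha$, counted directly by $b(x)$, with a block $\beta$ inserted after its first $k$ entries, where $\beta$ is empty or has its maximum outside a 3-cycle; hence $a_3=b\,(a-a_3)$. You stop at the first ``free'' layer and leave an arbitrary $\sigma$, so $a_3=u\,a$, and then need the extra sequence step $1+b=1/(1-u)$. Both give $a_3=\frac{b}{1+b}\,a$. The paper's stopping rule is intrinsic and needs no notion of a primitive layer. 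Yours gives a clean product with an unrestricted inner factor, but it obliges you to say exactly where a layer ends.

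That is the gap in your structure step. ``Forced by the outer structure'' is never made precise, and without it neither the uniqueness of the decomposition nor $u$ is defined. The right criterion is the dichotomy the paper uses; take the case $(n,a,b)$, $a<b$, since $(n,b,a)$ follows by Remark~\ref{rem:inverse}. If $a\neq n-b$, then $[b+1,n-a]$ is nonempty and consists of 3-cycle elements (Lemma~\ref{lem:after n}, Corollary~\ref{cor:after n}), so the new maximum $n-a$ lies in a 3-cycle and peeling must continue. If $a=n-b$, then $B\cup C=[n-2a+1,n]$ and the remainder is the $\pi$-invariant interval $[a+1,n-2a]$. A layer is therefore the run of peels up to and including the first one with $a=n-b$.

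For your induction on layers, the fact to prove is that when $a<n-b$, both the position and the image of $n-a$ lie in $[a+1,b-a]$. Values in $[b+1,n-a]$ come only from positions in $[a+1,b-a]$, and $\pi([b+1,n-a])=[a+1,n-b]\subseteq[a+1,b-a]$. This interval becomes an initial segment of the reduced permutation, so $A'$, $B'$ and the block between them stay inside it. By induction, $I=[k+1,k+m]$, where $k$ is the number of peeled 3-cycles.

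Two of your stated ingredients also need fixing. First, the elements of $[a+1,n-a]$ are \emph{not} all in 3-cycles: $3241=(1,3,4)$ is in $\mathcal{A}_4$ with $a=1$, $b=3$, yet $2$ is fixed. The correct set is $[a+1,n-b]$, together with its image and $[b+1,n-a]$. As you wrote it, that set together with $A,B,C$ would cover $[n]$ and force $\sigma$ to be empty.

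Second, $\pi$ is not a direct or skew sum. It is the inflation of $\hat\tau$ (your layer $\tau$ of size $3k$ with a new fixed point inserted at the slot) by $\sigma$, and $\pi^2$ is the inflation of $\hat\tau^2$ by $\sigma^2$. Since $I$ is an interval of both positions and values, a mixed 132 must have its ``3'' and ``2'' in $I$ and its ``1'' before and below $I$. So the converse reduces exactly to two conditions: $\hat\tau$ and $\hat\tau^2$ avoid 132, and $\tau([1,k])$ and $\tau^2([1,k])$ lie in $[k+1,3k]$. This is where the block structure of each peeled stage has to be used.
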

\begin{proof}
    Suppose $n$ is in a 3-cycle $(n,a,b)$ or $(n,b,a)$ with $a<b$.
    Let  $A = [1, a]$, $B = [b-a+1, b]$, and $C = [n-a+1, n]$. Since by Lemma~\ref{lem:after n} and Corollary~\ref{cor:after n}, these are exactly the elements of $a$ different 3-cycles, deleting all elements in each of these sets will give us a permutation that still avoids 132 and still preserves the cycle form of what remains, implying that $\pi^2$ will also still avoid 132. 

    Let us refer to the permutation that remains after deleting these elements as $\pi'\in\S_{n'}$ with $n'=n-3a$. Note that by Lemma~\ref{lem:after n}, if $a\neq n-b$, then we must have that $n'$ is also in a 3-cycle. The only way that $n'$ is not in a 3-cycle is if $a=n-b$ and so the elements of $B\cup C$ form a contiguous set. 

    Therefore, if we repeat this process until the largest element of the permutation remaining after deletion is not in a 3-cycle, we determine that the original permutation $\pi$ must have been composed from some $\beta\in\A_m$ with $\beta_m$ not in a 3-cycle and some $\alpha\in\A_{3k}$ composed only of 3-cycles by writing \[
    \pi = \alpha_1'\ldots \alpha_k' \beta_1'\ldots \beta_m' \alpha_{k+1}'\ldots \alpha_{3k}'
    \] where $\alpha'_i=\alpha_i+k+m$ if $1\leq i\leq 2k$, $\alpha'_i=\alpha_i$ if $2k+1\leq i\leq 3k$, and $\beta_i'=\beta_i+k$ for $1\leq i \leq m$.
    
    Thus we have that
    \[ a_3(x) = b(x)(a(x)-a_3(x))\]
     where $b(x)$ is as defined in Lemma~\ref{lem:3cycles} and $a(x)$ is the generating function for all permutations that strongly avoid 132,
    from which the theorem statement follows.
\end{proof}

\section{$\A_{n,k}$ with $k \geq 4$}

Consider the table in Figure~\ref{fig:ank}. For $k \geq 4$, the number of permutations that strongly avoid 132 with $n$ in a cycle of length $k$ appears to be very small and is often 0. In fact, we show that if $n$ is in a cycle of length $k$, we must have $k$ divides $n$. For the bulk of this section, we only consider those permutations $\pi$ where $n$ is in position $b>\frac{n}{2}$. The other case where $b<\frac{n}{2}$ is dealt with by taking the inverse of $\pi,$ since if $b>\frac{n}{2}$ and $\pi$ avoids 132, we must have $\pi_n<\frac{n}{2}.$

This section begins by providing more information about $\pi$ when $\pi$ contains the cycle $(a,b,n,c,\ldots)$.

\begin{lemma}\label{lem:4cycle-1} Let $n \geq 4$ and let $\pi \in \A_n$. Further suppose that $n$ is in a cycle of length at least 4 with $\pi_a=b$, $\pi_b=n$, $\pi_n=c$, and $b > \frac{n}{2}$. Then $a < b,$ $c \leq b-a$, and $a \leq 2b-n$. Furthermore,
\begin{enumerate}[label=(E{\arabic*})]
\item \label{eqn:1-ba}  $\pi([1,b-a]) = [n-b+1, n-a]$,
\item \label{eqn:ba-b} $\pi([b-a+1, b-1]) = [n-a+1, n-1]$,
\item \label{eqn:1-a}  $\pi([1, a-1]) = [b-a+1,b-1]$,
\item \label{eqn:a-b}  $\pi([a+1,b-1]) = [n-b+1, b-a] \cup [b+1,n-1]$,
\item \label{eqn:b-nc} $\pi([b+1, n-c]) = [c+1,n-b]$, and
\item \label{eqn:nc-n}  $\pi([n-c+1, n-1]) = [1,c-1]$
\end{enumerate}
\end{lemma}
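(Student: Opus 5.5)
The plan is to follow the template of the proof of Lemma~\ref{lem:after n}. We read off block structure in $\pi$ from the positions of the large values $n$ and $b$ and the final value $c$. We then get more block structure from the positions of $n$, $c$ and $\pi_c$ in $\pi^2$, where $\pi^2_a=n$, $\pi^2_b=c$ and $\pi^2_n=\pi_c$. The inequalities will fall out as the conditions under which these blocks fit together.

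First I would use only $\pi$. Since $\pi_b=n$ and $\pi$ avoids 132, every entry left of position $b$ exceeds every entry right of it. Hence
\[
\pi([1,b-1])=[n-b+1,n-1] \quad\text{and}\quad \pi([b+1,n])=[1,n-b].
\]
Since $b>\frac n2$, we have $b\ge n-b+1$, and $b\ne n$. So the value $b$ lies in $\pi([1,b-1])$, which gives $a<b$. Also $c=\pi_n\le n-b$. Next, because $c$ sits in the last position, every entry smaller than $c$ must come after every entry larger than $c$. This gives \ref{eqn:nc-n}, and then \ref{eqn:b-nc} is its complement inside $\pi([b+1,n])=[1,n-b]$.

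Second I would use $\pi^2$. Since $\pi^2_a=n$, the same argument gives $\pi^2([1,a-1])=[n-a+1,n-1]$ and $\pi^2([a+1,n])=[1,n-a]$. To get \ref{eqn:1-a}, I would argue as in Lemma~\ref{lem:after n}, splitting into two cases.
\begin{itemize}
\item If some entry of $\pi([1,a-1])$ lies outside $[b-a+1,b-1]$ and below $b$, then together with $b$ at position $a$ and a suitable entry of $\pi([a+1,b-1])$ it forms a 132.
\item If some such entry is above $b$, then applying $\pi$ again lands outside $[n-a+1,n-1]$, contradicting the description of $\pi^2([1,a-1])$.
\end{itemize}
Once \ref{eqn:1-a} holds, applying $\pi$ to $\pi^2([1,a-1])=[n-a+1,n-1]$ yields \ref{eqn:ba-b}. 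Then \ref{eqn:1-ba} is the complement of \ref{eqn:ba-b} in $\pi([1,b-1])=[n-b+1,n-1]$. Finally, \ref{eqn:a-b} is what remains of \ref{eqn:1-ba} and \ref{eqn:ba-b} after removing $\pi([1,a-1])$ and $\pi_a=b$.

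The inequalities come from consistency. The value $b=\pi_a$ must lie in the set $[n-b+1,n-a]$ of \ref{eqn:1-ba}, because \ref{eqn:ba-b} and \ref{eqn:1-a} place the values $b-a+1,\dots,b-1$ and $n-a+1,\dots,n-1$ elsewhere. Comparing the block sizes in \ref{eqn:a-b} should force $a\le 2b-n$. For $c\le b-a$, I would use $\pi^2_b=c$ with $b>a$. Here $c\in[1,n-a]$ must be compatible with the 132-avoidance of $\pi^2$ on positions after $a$. If $c>b-a$, I would look for a 132 in $\pi^2$ formed by an entry of $\pi^2([a+1,b-1])$, which comes from \ref{eqn:a-b}, the value $c$, and $\pi^2_n=\pi_c$.

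I expect the main obstacle to be the case analysis for \ref{eqn:1-a}, together with the bound $c\le b-a$. Both require exhibiting an explicit 132 in $\pi$ or $\pi^2$. It is delicate to choose the witnesses correctly while using only the partial block information available at that stage, and to track how the cycle-length-at-least-4 hypothesis rules out degenerate coincidences such as $c=a$ or $\pi_c=b$. I would first try to reuse the exact witnesses from the proof of Lemma~\ref{lem:after n}, namely $1\,\pi_{\pi_{n-1}}\,b$-type patterns in $\pi^2$, adapted to the present configuration.
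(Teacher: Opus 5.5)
\textbf{The gap is in the step for $c\le b-a$.} Everything before your ``consistency'' paragraph is sound and matches the paper's proof: $a<b$, (E1)--(E6), and your two-case argument for (E3). That case split is exactly the paper's combination of $\pi([1,a-1])\subseteq[b-a+1,n-1]$ with $\pi^{-1}([n-a+1,n-1])\subseteq[1,b-1]$. Your block-size comparison in (E4) also correctly gives $a\le 2b-n$.

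\emph{A false side claim.} You assert that $b=\pi_a$ must lie in $[n-b+1,n-a]$. This fails whenever $2a>b$. In that case position $a$ itself lies in $[b-a+1,b-1]$, and (E2) then puts $b$ in $[n-a+1,n-1]$. For example, $\pi=4\,5\,6\,7\,8\,9\,10\,11\,12\,1\,2\,3$ is strongly 132-avoiding with $n=12$ in the 4-cycle $(3,6,9,12)$. Here $b=9$ and $a=6$, so $b>n-a=6$. Nothing you need depends on this claim, so simply drop it.

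\emph{The proposed witness cannot be a 132.} You want a 132 in $\pi^2$ at positions $i<b<n$, where $\pi^2_b=c$ and $\pi^2_n=\pi_c$. But $c\le n-b<b$, so $\pi_c\in\pi([1,b-1])=[n-b+1,n-1]$, which gives $\pi_c>c$. The last two entries of any such triple are therefore increasing, and it is never a 132. So this route cannot yield a contradiction from $c>b-a$.

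\emph{The fix.} No pattern hunting is needed here. You already showed $c\le n-b$ in your first paragraph. The inequality $n-b\le b-a$ is just a rearrangement of $a\le 2b-n$. Hence $c\le n-b\le b-a$, which is exactly how the paper concludes.
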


\begin{proof}
We begin by denoting $\pi_c=d.$ In other words, $\pi$ contains the cycle $(a,b,n,c,d,\ldots)$. We note that $a,b,n,$ and $c$ must all be distinct but there is a possibility that $d=a$ if the cycle is length exactly 4. Because $\pi$ avoids 132 and $\pi_b = n$, we have
\begin{equation}\label{eqn:pi-front1} \pi([1,b-1]) = [n-b+1,n-1] \end{equation}
and
\begin{equation}\label{pi-end1} \pi([b+1,n]) = [1,n-b]. \end{equation}
We now show that both $a$ and $c$ are less than $b$. Since $b > \frac{n}{2},$ we have $b \geq n-b+1,$ and thus $a\in[1,b-1]$ (or $a < b$) since $\pi_a=b.$ Similarly, since $\pi_n=c,$ we have $c \leq n-b < b$ as desired. To continue, we note that $\pi([n-c+1,n-1]) = [1,c-1]$ since $\pi$ avoids 132 and $\pi_n=c$. Thus, ~\ref{eqn:nc-n} holds. This fact in conjunction with Equation~(\ref{pi-end1}) proves ~\ref{eqn:b-nc} as well.

Since $\pi_a = b$, elements in $\pi([1,a-1])$ must either be all larger than $b$ or must contain the numbers immediately preceding $b$. Since there are $a-1$ positions, we have 
\begin{equation}\label{eqn:pi1-asub}
\pi([1,a-1]) \subseteq [b-a+1,n-1].
\end{equation}
We will show that in fact $\pi([1,a-1]) = [b-a+1,b-1]$ by 
looking at $\pi^2.$ Since $\pi^2_a=n$, we have \begin{equation}\label{eqn:pi2front} \pi^2([1,a-1]) = [n-a+1, n-1] \end{equation} and $\pi^2([a+1, n]) = [1,n-a]$. We know that $c = \pi^2_b \in \pi^2([a+1,n-1])$, and thus $c \leq n-a.$ Similarly, $d = \pi^2_n$ implies that $d \leq n-a.$ By taking $\pi^{-1}$ of both sides of Equation (\ref{eqn:pi2front}) and combining this with Equation (\ref{eqn:pi1-asub}), we see that
\begin{equation} \label{eqn:weird1}
\pi([1,a-1]) = \pi^{-1}([n-a+1,n-1]) \subseteq [b-a+1,n-1]
\end{equation}
However, since $a < b,$ we have that $[n-a+1,n-1] \subseteq [n-b+1,n-1]$. Combining this with Equation (\ref{eqn:pi-front1}) gives
\begin{equation} \label{eqn:weird2}
\pi^{-1}([n-a+1,n-1]) \subseteq \pi^{-1}([n-b+1,n-1]) = [1,b-1].
\end{equation}
Combining Equations (\ref{eqn:weird1}) and (\ref{eqn:weird2}) yields
\[ \pi^{-1}([n-a+1,n-1]) \subseteq [b-a+1,n-1] \cap [1,b-1] = [b-a+1,b-1].\]
Since $[n-a+1,n-1]$ contains $a-1$ elements and $[b-a+1,b-1]$ contains $a-1$ elements, we have
\[ \pi^{-1}([n-a+1,n-1]) = [b-a+1,b-1],\]
and thus \ref{eqn:ba-b} is true.

Using \ref{eqn:ba-b} along with Equation (\ref{eqn:pi-front1}) also proves \ref{eqn:1-ba}. Furthermore, combining Equation (\ref{eqn:pi2front}) with \ref{eqn:ba-b} yields
\[ \pi^2([1,a-1]) = [n-a+1,n-1] = \pi([b-a+1,b-1]). \]
Taking $\pi^{-1}$ of both sides of this equation yields \ref{eqn:1-a} as desired. Combining Equation (\ref{eqn:pi-front1}) with \ref{eqn:1-a} proves \ref{eqn:a-b} as well.

We now show that $c \leq b-a$ and $a \leq 2b-n.$  Consider \ref{eqn:a-b}. Since $[a+1,b-1]$ contains $n-b-1$ elements and $[b+1,n-1]$ contains $b-a-1$ elements, we have $a \leq 2b-n$. 
Also, because $c < b,$ and $\pi_c=d$, we have $d \in [n-b+1,n-1].$ In particular, this implies that $n-b+1 \leq d$ and thus $c \leq n-b < d.$  We also note that $c \leq n-b = n-2b+b \leq -a +b$, and so $c \leq b-a.$
\end{proof}

Lemma \ref{lem:4cycle-1} gives information about the structure of $\pi$ when $\pi$ strongly avoids 132 and $n$ is in a cycle of length 4. Notice that \ref{eqn:1-ba} and \ref{eqn:1-a} both give information about the beginning elements of $\pi$, while \ref{eqn:ba-b} and \ref{eqn:a-b} give information about the middle of $\pi$. By comparing $b-a$ and $a$, we can refine this information. It turns out that $b-a \leq a$ occurs only in the case where $b \geq \frac{2n}{3}$, while $a < b-a$ only occurs when $\frac{n}{2} < b < \frac{2n}{3}$. The following lemma provides more information about the structure in these cases.

\begin{lemma}\label{lem:cyclelength} Let $n \geq 4$ and let $\pi \in \A_n$. Further suppose that $n$ is in a cycle of length at least 4 with $\pi_a=b$ and $\pi_b=n$, and $b > \frac{n}{2}$.
\begin{enumerate}
\item If $b \geq \frac{2n}{3}$, then $a=2b-n$.
\item If $\frac{n}{2} < b< \frac{2n}{3}$, then $n$ is in a cycle of at least length 5 with $\pi_n=n-b$ and $\pi_{n-b} = 2n-2b$.
\end{enumerate}

\end{lemma}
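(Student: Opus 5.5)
The plan is to work entirely from the block structure given by Lemma~\ref{lem:4cycle-1}, adding two things to (E1)--(E6). The first is the constraint that $\pi^2$ avoids 132 near the positions $a$, $b$, $n$, where $\pi^2_a=n$, $\pi^2_b=c$ and $\pi^2_n=d$ with $d=\pi_c$. The second is the cycle condition that $a,b,n,c$ are distinct and, in part 2, that $d\neq a$.

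By (E4), $\pi([a+1,b-1])=[n-b+1,b-a]\cup[b+1,n-1]$, and the first block has size $b-a-(n-b)=2b-n-a\ge 0$. So part~1, $a=2b-n$, is equivalent to the ``small block'' $[n-b+1,b-a]$ being empty. Equivalently, every position in $[a+1,b-1]$ is sent above $b$.

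\textbf{Part 1.} Assume $b\ge 2n/3$ and, for contradiction, $a<2b-n$. Then some $j\in[a+1,b-1]$ has $s=\pi_j\in[n-b+1,b-a]$.
\begin{itemize}
\item First I compare $b-a$ with $a$. From $a\le 2b-n$ and $b\ge 2n/3$ I expect to get $b-a\le a$, or at least enough overlap that $s\in[n-b+1,b-a]\subseteq[1,a]$ when it matters.
\item Then $s\le a-1$, so (E3) gives $\pi^2_j=\pi_s\in[b-a+1,b-1]$. By (E2), $\pi^3_j\in[n-a+1,n-1]$.
\item Now look at $\pi^2$. We have $\pi^2_a=n$, and (E1), (E2) control the values of $\pi^2$ on $[1,a-1]$ and on the later positions. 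I would look for a 132 in $\pi^2$ with the ``1'' at position $j$ or at a position in $[1,a-1]$, the ``3'' at a position $j'$ with $\pi_{j'}>b$, and the ``2'' coming from positions $[b+1,n]$, where $\pi^2$ takes values in $\pi([1,n-b])\subseteq[n-b+1,n-a]$ by (E1).
\item An alternative is to first show that within $[a+1,b-1]$ the small values must all precede the large ones. This should follow because a small value after a large value $L$, together with the value $b$ at position $a$, gives the order $b$, $L$, $s$, which is not itself a 132 but becomes one after applying $\pi$. I would then derive the contradiction from where $s$ sits relative to $b$ and $d$.
\end{itemize}

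\textbf{Part 2.} Assume $n/2<b<2n/3$. Then $a<b-a$, and (E1), (E3) overlap differently: positions $[a,b-a]$ are sent into $[n-b+1,b-a]$. I would first show $c=n-b$, i.e.\ that (E5) collapses, by the following steps.
\begin{itemize}
\item Suppose $c<n-b$. By (E5), positions $[b+1,n-c]$ are sent into $[c+1,n-b]$.
\item In $\pi^2$, these positions then carry values $\pi([c+1,n-b])\subseteq[n-b+1,n-a]$, and these sit after $\pi^2_b=c$.
\item The entries $\pi^2$ at positions $[n-c+1,n-1]$ lie in $\pi([1,c-1])$, and $\pi^2_n=d>c$.
\item Together these should produce a 132 in $\pi^2$ of the form $c,\ \text{big},\ d$, unless $c=n-b$.
\end{itemize}
With $c=n-b$, (E1) and $b<2n/3$ give $d=\pi_{n-b}\in[n-b+1,n-a]$. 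The same $\pi^2$-monotonicity argument after position $b$ should pin $d$ to the smallest admissible value forced by the block images, namely $d=2n-2b$. Finally, $d\neq a$ holds because $a<n-b<2n-2b$, so the cycle has length at least 5.

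The main obstacle is Part~1: producing an explicit 132 in $\pi^2$ from a single small value inside $[a+1,b-1]$. Getting that requires carefully tracking $\pi^2$ on $[a+1,b-1]$ through (E1)--(E4), and may need the sub-case split $s\le a-1$ versus $s\ge a$.
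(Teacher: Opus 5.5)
Your proposal has a genuine gap, and it comes from the case split. In Part~1 you expect $b-a\le a$ to follow from $a\le 2b-n$ and $b\ge 2n/3$. These give only an upper bound on $a$: for example $n=12$, $b=8$, $a=1$ satisfies both, yet $b-a=7>a$. Ruling out $a<b-a$ when $b\ge 2n/3$ is in fact the hard half of the lemma. The paper splits on $a$ versus $b-a$, not on $b$ versus $2n/3$.
\begin{enumerate}
\item If $b-a\le a$, then $[a+1,b-1]\subseteq[b-a+1,b-1]$, so (E2) and (E4) give $[n-b+1,b-a]\subseteq[n-a+1,n-1]$. This forces the small block to be empty, so $a=2b-n$ (and then $b\ge 2n/3$) in two lines, with no need for $\pi^2$ or $\pi^3$.
\item If $a<b-a$, a long analysis ends with $b<d=2n-2b$, hence $b<2n/3$.
\end{enumerate}
Part~1 is obtained only by contraposing the second case. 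So a local search for a 132 around one small value $s$ cannot finish Part~1; it needs the entire Part~2 argument. Your opening claim $a<b-a$ in Part~2 is fine, since $a\le 2b-n<b/2$ there.

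Part~2 also does not go through as sketched. Since $\pi^2_b=c$ and $\pi^2_n=d>c$, 132-avoidance only tells you $\pi^2([b+1,n-1])\subseteq[1,d-1]$. In particular it gives $\pi([c+1,n-b])\subseteq[n-b+1,n-a]\cap[1,d-1]$. Nothing forces a value above $d$ there, so no pattern of the form $c$, (large value), $d$ arises, and there is no contradiction yet. The missing steps are these:
\begin{enumerate}
\item[(i)] Show $d>b$. Take $k\in[b+1,n-1]$ with $\pi_k=a$, which exists because (E1) gives $b\le n-a$. Then $\pi^2$ reads $c,b,d$ at positions $b<k<n$, which would be a 132 if $d<b$.
\item[(ii)] Deduce $c>a$ from (E3).
\item[(iii)] Use 132-avoidance of $\pi$ around $\pi_a=b$ and $\pi_c=d$ to split $\pi([a+1,c-1])=[b+1,d-1]\cup[d-c+1,b-a]$ and $\pi([c+1,b-a])=[n-b+1,d-c]\cup[d+1,n-a]$.
\item[(iv)] From $\pi^2_c=\pi_d\le n-b<d$, get $\pi^2([c+1,n-1])\subseteq[1,d-1]$. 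This yields $\pi([c+1,n-b])\subseteq[n-b+1,d-c]$, so $d\ge 2n-2b$. It also yields $\pi([n-b+1,d-c])=[n-b+1,d-c]$. Injectivity then forces $[c+1,n-b]\subseteq[n-b+1,d-c]$, i.e.\ $c=n-b$.
\end{enumerate}
Likewise, $d=2n-2b$ is not the smallest admissible value of anything. It comes from combining $d\ge 2n-2b$ with the 132 in $\pi^2$ formed by $\pi_d$, $\pi^2_{n-b+1}$, $n-b$ at positions $n-b<n-b+1<b$. That pattern would exist whenever $[n-b+1,d-n+b]$ is nonempty, which rules out $d>2n-2b$.
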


\begin{proof}

This proof compares $b-a$ and $a$ and examines the two cases in detail. We will show that if $b-a \leq a,$ then $b \geq \frac{2n}{3}$, and if $a< b-a$, then $\frac{n}{2} < b< \frac{2n}{3}$.

In the first case, suppose $b -a \leq a$, and consider \ref{eqn:ba-b} and \ref{eqn:a-b}. Then
\[ [n-b+1,b-a] \cup [b+1,n-1] = \pi([a+1,b-1]) \subseteq \pi([b-a+1,b-1]) = [n-a+1,n-1].  \]
Because $[n-b+1,b-a] \supset [n-a+1,n-1]$, we must have $[n-b+1,b-a] = \emptyset$ (since $a < b$), and thus $b-a < n-b+1$ or $a \geq 2b-n.$ Combining this with the result in Lemma~\ref{lem:4cycle-1} yields $a=2b-n$. Furthermore, since $a=2b-n$ and $b-a \leq a,$ we have $2n \leq 3b$ or $b \geq \frac{2n}{3}.$

We now consider the case where $a < b-a$. We begin by denoting $\pi_n=c$, $\pi_c=d$ and recall from the proof of Lemma~\ref{lem:4cycle-1} that $c < d$.  Since $a \in [1,b-a]$, by \ref{eqn:1-ba}, we have $\pi_a = b \in [n-b+1,n-a]$. Thus $b \leq n-a$, or equivalently, $a \leq n-b$. Since $a \in [1,n-b]$, by Equation (\ref{pi-end1}), we can choose $k \in [b+1,n-1]$ so that $\pi_k=a.$ In $\pi^2,$ we then have the pattern $\pi^2_b\pi^2_k\pi^2_n = cbd.$ Because $\pi^2$ avoids 132 and $c < d$ and $c < b$, we must have $d > b$. Furthermore, since $\pi_c = d > b,$ by \ref{eqn:1-a}, we must have $c > a.$

Summarizing our results so far, in the case where $a < b-a$, we have $a < c \leq b-a < b < d < n.$ 
By combining \ref{eqn:ba-b} and \ref{eqn:a-b}, we have
\begin{equation}\label{eqn:pi-aba} \pi([a+1, b-a]) = [n-b+1,b-a] \cup [b+1,n-a].
\end{equation} Since $c \in [a+1,b-a]$, and $\pi_c=d >b$, we see that $[b+1,d] \subseteq \pi([a+1,c])$ otherwise $\pi_a\pi_c=bd$ followed by a missing element from $[b+1,d]$ would be a 132-pattern in $\pi.$ Since $[b+1,d]$ contains $d-b$ elements and $[a+1,c]$ contains $c-a$ elements, we have $d-b \leq c-a$ or $d-c \leq b-a$. We also have
$\pi([a+1,c]) \subseteq [1,d]$ otherwise $\pi_a=b$ followed by something larger than $d$ followed by $\pi_c=d$ would be a 132 pattern in $\pi$. Using this information along with Equation (\ref{eqn:pi-aba}), we have
\[ [b+1,d] \subseteq \pi([a+1,c]) \subseteq [n-b+1,b-a] \cup [b+1,d].\]
Also, $\pi([a+1,c])$ must contain exactly $c-a - (d-b)$ elements from $[n-b+1,b-a].$ It must be the largest of these otherwise $\pi$ contains a 132 pattern. Thus
\begin{equation}\label{eqn:pi-ac}
\pi([a+1,c-1]) = [b+1,d-1] \cup [d-c+1,b-a].
\end{equation}
Combining Equation (\ref{eqn:pi-ac}) with Equation (\ref{eqn:pi-aba}), we also have
\begin{equation}\label{eqn:pi-cba} \pi([c+1,b-a]) = [n-b+1,d-c] \cup [d+1,n-a]. \end{equation}

We now turn our attention to $\pi^2.$ We note that $\pi^2_c = \pi_d \in [1,n-b]$ by Equation (\ref{pi-end1}) since $d > b$. Also, $\pi^2_n=d \geq n-b$. Since $\pi^2$ avoids 132, we have
\begin{equation}\label{eqn:pi2-cn} \pi^2([c+1,n-1]) \subseteq [1,d-1].\end{equation}  Combining this result with \ref{eqn:b-nc} and Equation (\ref{eqn:pi-cba}), we have \begin{align*} \pi^2([b+1,n-c]) &= \pi([c+1,n-b])\\ &\subseteq ([n-b+1,d-c] \cup [d+1,n-a]) \cap [1,d-1] = [n-b+1,d-c].\end{align*}
Now $[c+1,n-b]$ contains $n-b-c$ elements and $[n-b+1,d-c]$ contains $d-c-n+b$ elements, and thus $2n-2b \leq d.$

Similarly, using the facts that $d-c \leq b-a$, $c \leq n-b$, and Equations (\ref{eqn:pi-cba}) and (\ref{pi-end1}) along with Equation (\ref{eqn:pi2-cn}), we have
\begin{align*} \pi^2([c+1,b-a]) &= \pi([n-b+1,d-c]) \cup \pi([d+1,n-a])\\
 &\subseteq \pi([c+1,b-a]) \cup \pi([b+1,n])\\
 &\subseteq ([n-b+1,d-c] \cup [d+1,n-a] \cup [1,n-b]) \cap [1,d-1]\\
 &\subseteq [n-b+1,d-c] \cup[1,n-b].
 \end{align*}
In particular, we note that $\pi([n-b+1,d-c])$ must be contained in $[n-b+1,d-c].$ Since these sets are the same size, equality must hold and
\[ \pi([n-b+1,d-c])=[n-b+1,d-c].\]
But recall that \[ \pi([c+1,n-b]) \subseteq [n-b+1,d-c] = \pi([n-b+1,d-c]), \]
and so $[c+1,n-b] \subseteq [n-b+1,d-c]$. However, $c \leq n-b$ which implies $c=n-b$.

Since $c=n-b$, \ref{eqn:nc-n} becomes \[ \pi([b+1,n-1]) = [1,n-b-1],\] and in particular, $\pi_d < n-b-1.$
If $[n-b+1,d-n+b]$ is not the empty set, then $\pi^2_{n-b}\pi^2_{n-b+1}\pi^2_b = \pi_d (\pi^2_{n-b+1})(n-b)$ is a 132 pattern since $\pi^2_{n-b+1} \in [n-b+1,d-n+b]$. Thus, $[n-b+1,d-n+b]=\emptyset$ which implies that $n-b +1 > d-n+b$, or $d < 2n-2b+1.$ Since $2n-2b \leq d,$ we have equality and $d=2n-2b.$  Since $b < d,$ we now have $3b< 2n$ or $b < \frac{2n}{3}.$
\end{proof}

Using the results in the preceding lemmas, we have more information about the structure of $\pi$ when $n$ is in a cycle of length at least 4.  We summarize these results in the following two propositions. The first proposition describes $\pi$ when $b \geq \frac{2n}{3}$.

\begin{proposition}\label{prop:pi-form1} Let $n \geq 4$ and let $\pi \in \A_n.$ Further suppose that $n$ is in a cycle of length at least 4 with $\pi_b = n$ and $b \geq \frac{2n}{3}.$ Then $\pi=\pi_1\pi_2\cdots\pi_n$ can be written as
\[\pi_k = \begin{cases} \alpha_k^{-1} + n-b & \text{if } k \in [1,n-b]\\
k+n-b & \text{if } k \in [n-b+1, b]\\
\alpha_{k-b} & \text{if } k \in [b+1, n]
\end{cases}
\]
where $\alpha \in \S_{n-b}(132)$. Visually, we have
\[ \pi= \underline{\alpha^{-1}} (2n-2b+1)(2n-2b+2)\cdots (n)\ \alpha\]
where $\underline{\alpha^{-1}}$ is the inverse of $\alpha$ with each element shifted by $n-b$. 
\end{proposition}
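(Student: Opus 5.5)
The plan is to use Lemma~\ref{lem:cyclelength} to pin down $a$, read off the block structure of $\pi$ from Lemma~\ref{lem:4cycle-1}, and then use 132-avoidance of $\pi$ and of $\pi^2$ to fix the order inside each block.

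\textbf{Step 1: the block structure.} Since $b \geq \frac{2n}{3}$, Lemma~\ref{lem:cyclelength} gives $a = 2b-n$. Hence $b-a = n-b$ and $n-a = 2n-2b$. Substituting into \ref{eqn:1-ba}, \ref{eqn:ba-b} and $\pi_b = n$, and using Equation~(\ref{pi-end1}), we get
\[
\pi([1,n-b]) = [n-b+1,2n-2b], \qquad \pi([n-b+1,b]) = [2n-2b+1,n], \qquad \pi([b+1,n]) = [1,n-b].
\]
So $\pi$ splits into three position blocks: a first block of length $n-b$, a middle block of length $2b-n$, and a last block of length $n-b$. Their value sets are, respectively, a middle-low interval, the top interval, and the bottom interval.

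\textbf{Step 2: the middle and last blocks.} The middle block must be increasing. Suppose $i<j$ lie in $[n-b+1,b]$ with $\pi_i > \pi_j$. Then $\pi_1 \in [n-b+1,2n-2b]$ is smaller than both, so $\pi_1\pi_i\pi_j$ is a 132 pattern. This uses $n-b \geq 1$, which holds because $b < n$. An increasing sequence of positions $[n-b+1,b]$ onto values $[2n-2b+1,n]$ forces $\pi_k = k+n-b$ for all $k$ in the middle block. For the last block, define $\alpha \in \S_{n-b}$ by $\alpha_j = \pi_{b+j}$. It avoids 132 because it is a subsequence of $\pi$. Likewise define $\beta \in \S_{n-b}$ by $\pi_k = \beta_k + n-b$ for $k \in [1,n-b]$. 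It remains to show $\beta = \alpha^{-1}$.

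\textbf{Step 3: $\beta = \alpha^{-1}$ via $\pi^2$.} This is the main obstacle; I would handle it by computing two segments of $\pi^2$.
\begin{itemize}
\item For $k \in [b+1,n]$ we have $\pi_k = \alpha_{k-b} \in [1,n-b]$. Therefore $\pi^2_k = \beta_{\alpha_{k-b}} + n-b$, so the last $n-b$ entries of $\pi^2$ are $\beta\circ\alpha$ shifted up by $n-b$.
\item For $k \in [2b-n+1,b]$ we have $\pi_k = k+n-b \in [b+1,n]$. Therefore $\pi^2_k = \alpha_{k-2b+n}$, so these positions of $\pi^2$ carry exactly the values $[1,n-b]$.
\end{itemize}
The hypothesis $b \geq \frac{2n}{3}$ is used exactly here: it gives $2b-n+1 \geq n-b+1$, so the positions $[2b-n+1,b]$ lie inside the middle block and the formula $\pi_k = k+n-b$ applies to them. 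In particular, the value $1$ appears in $\pi^2$ at some position $\leq b$. Every entry of the final segment is larger than $1$, and $\pi^2$ avoids 132, so the final segment must be increasing. Its values are exactly $[n-b+1,2n-2b]$, so $\beta\circ\alpha$ is the identity. Hence $\beta = \alpha^{-1}$, which gives the stated form $\pi = \underline{\alpha^{-1}}\,(2n-2b+1)\cdots n\;\alpha$.
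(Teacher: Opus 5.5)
Your proof is correct and follows essentially the same route as the paper: set $a=2b-n$ via Lemma~\ref{lem:cyclelength}, read off the three blocks from Lemma~\ref{lem:4cycle-1}, force the middle block to increase using $\pi_1$, and force $\beta\circ\alpha=\mathrm{id}$ because the last $n-b$ entries of $\pi^2$ must increase. Your second bullet is unnecessary: the final segment of $\pi^2$ takes the values $[n-b+1,2n-2b]$, so $1$ must already sit in some position $\leq b$, which is how the paper argues. So the hypothesis $b\geq\frac{2n}{3}$ is genuinely needed only to invoke Lemma~\ref{lem:cyclelength}, not "exactly" in Step 3 as you state.
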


\begin{proof}

If $\pi_b=n$ and $\pi_a=b$ with $b \geq \frac{2n}{3},$ then $a=2b-n$, or equivalently, $b-a = n-b$ by Lemma~\ref{lem:cyclelength}. Thus, \ref{eqn:1-ba} can be rewritten as
\[ \pi([1, n-b] = [n-b+1, 2n-2b]. \] Combining this results with Equation (\ref{eqn:pi-front1}) shows that
\[ \pi[n-b+1, b-1] = [2n-2b+1, n-1].\]

Since all of these elements are larger than $\pi_1$ and $\pi$ avoids 132, they must appear in increasing order. Visually, we have
\[ \pi = \pi_1\pi_2\cdots\pi_{n-b} (2n-2b+1)(2n-2b+2)\cdots n\  \pi_{b+1}\pi_{b+2}\cdots\pi_n\] where
\[ \pi([1,n-b]) = [n-b+1,2n-2b] \quad \text{and} \quad \pi([b+1,n])=[1,n-b].\]
Furthermore, in $\pi^2$, we have \[ \pi^2([b+1,n]) = \pi([1,n-b]) = [n-b+1, 2n-2b].\] Since 1 is not in $[n-b+1,2n-b]$, it must appear in $\pi^2$ before position $b+1$. Thus, in $\pi^2,$ all of the elements in $\pi^2([b+1,n])$ must occur in increasing order. Specifically, we have $\pi^2_{b+k} = n-b+k$ for $k \in [1, n-b]$, or $\pi_{b+k} = \pi^{-1}_{n-b+k}$ for all $k$ in $[1,n-b]$. In other words, the values of $\pi_1\pi_2\cdots\pi_{n-b}$ are forced by the values of $\pi_{b+1}\pi_{b+2}\cdots\pi_{n}$.  Also, because $\pi$ avoids 132, we must have the subword $\pi_{b+1}\pi_{b+2}\cdots\pi_n$ avoids 132, and thus is contained in $\S_{n-b}(132).$
\end{proof}

The next proposition gives the structure of $\pi$ when $\frac{n}{2} < b < \frac{2n}{3}$.

\begin{proposition}\label{prop:pi-form2} Let $n \geq 4$ and let $\pi \in \A_n$. Further suppose that $n$ is in a cycle of length at least 4 with $\pi_b = n$ and $\frac{n}{2} < b < \frac{2n}{3}.$ Then $\pi = \pi_1\pi_2\cdots\pi_n$ can be written as
\[ \pi_k = \begin{cases} \alpha_k^{-1} + n-b & \text{if } k \in [1,2b-n]\\
k + n-b & \text{if } k \in [2b-n+1, b]\\
\alpha_{k-b} & \text{if } k \in [b+1,3b-n]\\
k-b & \text{if } k \in [3b-n+1, n] \end{cases}\]
where $\alpha \in \S_{2b-n}(132).$ Visually, we have
\[ \pi = \underline{\alpha^{-1}}\ (b+1)(b+2)\cdots (n)\ \alpha \ (2b-n+1)(2b-n+1)\cdots(n-b)\]
where $\underline{\alpha^{-1}}$ is the inverse of $\alpha$ with each element shifted by $n-b$.
\end{proposition}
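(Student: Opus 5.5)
The plan is to follow the template of the proof of Proposition~\ref{prop:pi-form1}, using the extra information from part 2 of Lemma~\ref{lem:cyclelength}. Let $\pi_a=b$. By that lemma, $c=\pi_n=n-b$ and $d=\pi_{n-b}=2n-2b$, with $a<c\le b-a<b<d$. Substituting $c=n-b$ and $d=2n-2b$ into \ref{eqn:nc-n}, \ref{eqn:b-nc}, Equation~(\ref{eqn:pi-ac}) and Equation~(\ref{eqn:pi-cba}) gives $\pi([b+1,n-1])=[1,n-b-1]$, $\pi([a+1,n-b-1])=[b+1,2n-2b-1]\cup[n-b+1,b-a]$, and $\pi([n-b+1,b-a])=[2n-2b+1,n-a]$ (the set $[n-b+1,d-c]$ is empty).

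The first real step is to show $a=2b-n$. Lemma~\ref{lem:4cycle-1} already gives $a\le 2b-n$, so I must rule out $a<2b-n$. In that case the set $[n-b+1,b-a]$ is nonempty, and some position in $[a+1,n-b-1]$ carries a value $x\in[n-b+1,b-a]$. I would look at $\pi^2$, in the same way the end of the proof of Lemma~\ref{lem:cyclelength} killed $[n-b+1,d-n+b]$. We have $\pi^2_{n-b}=\pi_d$, and since $d\in[b+1,n-1]$ this value is small ($<n-b$). We also have $\pi^2_n=d=2n-2b$ and $\pi^2_b=n-b$. The value $\pi(x)$ lies in $\pi([n-b+1,b-a])=[2n-2b+1,n-a]$, so it exceeds $d$. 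I would then locate the position of $x$ relative to $n-b$, or use the position $\pi^{-1}(a)\in[b+1,n-1]$ as in the earlier proof, to produce a $132$ in $\pi^2$ of the form (small)(large)($d$ or $n-b$). If a direct pattern is elusive, the fallback is a counting argument. Intersect $\pi^2([a+1,n-b-1])$ with the constraint $\pi^2([c+1,n-1])\subseteq[1,d-1]$ from Equation~(\ref{eqn:pi2-cn}), and compare cardinalities as was done to obtain $2n-2b\le d$. This step is where I expect the main difficulty.

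Once $a=2b-n$ holds, the rest should be routine. From \ref{eqn:1-a}, \ref{eqn:ba-b} and the displays above, the positions $[2b-n+1,b]$ carry exactly the values $[b+1,n]$. Every one of these values exceeds $\pi_1\le b$, so they must appear in increasing order; this gives $\pi_k=k+n-b$ on $[2b-n+1,b]$. Likewise $\pi([1,2b-n])=[n-b+1,b]$.

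On the right, the positions $[b+1,n]$ carry the values $[1,n-b]$, and $\pi_n=n-b$. I would show that the positions $[3b-n+1,n]$ carry $[2b-n+1,n-b]$ in increasing order, using $\pi^2$. These positions map under $\pi$ into $[2b-n+1,n-b]$, and $\pi$ maps that set increasingly onto $[b+1,2n-2b]$. Since $\pi^2_n=2n-2b$ is the maximum of this block and $1$ occurs earlier in $\pi^2$, avoidance of $132$ forces the block to be increasing. Hence $\pi_k=k-b$ there, and positions $[b+1,3b-n]$ carry $[1,2b-n]$; call this subword $\alpha$. It avoids $132$ because $\pi$ does.

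Finally, exactly as in Proposition~\ref{prop:pi-form1}, I would compute $\pi^2([b+1,3b-n])=\pi([1,2b-n])=[n-b+1,b]$. These values must appear increasingly in $\pi^2$, since a smaller element ($1$) precedes them. So $\pi^2_{b+k}=n-b+k$, that is, $\pi_k=\alpha^{-1}_k+n-b$ for $k\in[1,2b-n]$, which is the stated form.
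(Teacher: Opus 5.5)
Your ``first real step'' cannot be carried out, because $a=2b-n$ is false in this range of $b$. The paper's own example with $n=12$, $b=7$ includes $\pi=7\,6\,8\,9\,10\,11\,12\,2\,1\,3\,4\,5$. This is a $12$-cycle in $\A_{12}$ with $\pi_7=12$, $\pi_{12}=5=n-b$ and $\pi_5=10=2n-2b$, yet $\pi_1=7$, so $a=1<2=2b-n$. The value $6\in[n-b+1,b-a]$ sits at position $2$, and $\pi^2=12\,11\,2\,1\,3\,4\,5\,6\,7\,8\,9\,10$ contains no $132$. In general, in the target form $a=\alpha_{2b-n}$ is the last letter of $\alpha$, so $a=2b-n$ holds only when $\alpha$ ends in its maximum. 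So neither your pattern search nor your counting fallback can succeed. Everything you then derive from $[n-b+1,b-a]=\emptyset$ is left unsupported, notably $\pi([1,2b-n])=[n-b+1,b]$ and $\pi([2b-n+1,b])=[b+1,n]$.

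What is true, and what the paper proves, is that the values $[n-b+1,b-a]$ occupy exactly the positions $[a+1,2b-n]$. Your instinct to use a $132$ in $\pi^2$ ending at $\pi^2_b=n-b$ is right, but it forces an ordering rather than a contradiction. By (\ref{eqn:pi-ac}) (with $\pi_c=d$), (\ref{eqn:pi-cba}) and \ref{eqn:nc-n}, we have $\pi^2([a+1,n-b])=\pi([b+1,2n-2b])\cup\pi([n-b+1,b-a])\subseteq[1,n-b-1]\cup[2n-2b+1,n-a]$. All these positions precede $b$, so every value in $[2n-2b+1,n-a]$ must come before every value in $[1,n-b-1]$. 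Hence $\pi^2([a+1,2b-n])=[2n-2b+1,n-a]$, so $\pi([a+1,2b-n])=[n-b+1,b-a]$. Together with \ref{eqn:1-a} and $\pi_a=b$, this gives $\pi([1,2b-n])=[n-b+1,b]$ and $\pi([2b-n+1,b])=[b+1,n]$ with no assumption on $a$.

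After that your argument goes through, with one exception. The claim ``these positions map under $\pi$ into $[2b-n+1,n-b]$'' is asserted rather than shown. You can get it, together with the $\alpha^{-1}$ block, from the fact that $\pi^2$ is increasing on $[b+1,n]$: its value set there is $\pi([1,n-b])=[n-b+1,2n-2b]$, all above $\pi^2_b=n-b$.
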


\begin{proof}

Since $b < \frac{2n}{3}$, we must have $a < b-a$ and that $\pi$ contains the cycle $(a, b, n, n-b, 2n-2b, \cdots)$ by Lemma~\ref{lem:cyclelength}. Consider again $\pi^2$ while noting that $c = n-b$ and $d=2n-2b$. By Equation~(\ref{eqn:pi-ac}), \ref{eqn:nc-n}, and Equation~(\ref{eqn:pi-cba}) we have
\begin{align*}
\pi^2([a+1, n-b]) &= \pi([b+1, 2n-2b]) \cup \pi([n-b+1,b-a])\\
&= \pi([b+1, 2n-2b]) \cup [2n-2b+1, n-a]\\
&\subseteq [1,n-b-1] \cup [2n-2b+1, n-a].
\end{align*}

Because $b > n-b$ and $\pi^2_b = n-b$, we must have all of the elements in $[2n-2b+1,n-a]$ appear before all of the elements in $\pi([b+1, 2n-2b]) \subseteq [1,n-b-1]$ in $\pi^2$ in order for $\pi^2$ to avoid 132. Thus,
\begin{equation*}
    \pi^2([a+1, 2b-n]) = [2n-2b+1, n-1]
\end{equation*}
which implies that
\begin{equation}\label{eqn:pi-a2bn}
    \pi([a+1, 2b-n]) = \pi^{-1}([2n-2b+1, n-1]) = [n-b+1, b-a].
\end{equation}
Combining this with \ref{eqn:1-a} yields
\begin{equation}\label{eqn:pi-frontcase2}
\pi([1,2b-n]) = [n-b+1, b]
\end{equation}
Furthermore, combining Equation (\ref{eqn:pi-a2bn}) with Equation (\ref{eqn:pi-ac}) gives
\begin{equation}
\pi([2b-n+1, n-b-1]) = [b+1, 2n-2b+1],
\end{equation}
and because these elements are all larger than $b$ which occurs in a position preceding these, they must all be increasing. Combining Equations (\ref{eqn:pi-cba}) with \ref{eqn:ba-b} gives
\[ \pi([n-b, b]) = [2n-2b, n],\]
and again, these elements must be increasing. Thus we have $\pi_k = k + n-b$ for $k \in [2b-n+1,b]$ as desired.

Notice that $\pi^2_b = n-b$ and 
\[ \pi^2([b+1, n]) = \pi([1,n-b]) = [n-b+1, 2n-2b]. \]
Since these elements are all larger than $n-b$ and $\pi^2$ avoids 132, they must be increasing and thus $\pi^2_k = n-2b+k$ for $k \in [b+1,n]$. In particular, suppose $k \in [3b-n+1, n]$. We then have \[ \pi(k) = \pi^{-1}(n-2b+k) = \pi^{-1}((k-b) + n-b).\] Since $k-b \in [2b-n+1, n-b]$, we have $\pi_k = k-b$. Thus $\pi_k = k-b$ for $k \in [3b-n+1, n]$ as desired. Furthermore, for $k \in [b+1, 3b-n]$, $\pi_k = \pi^{-1}_{n-2b+k} \in [1,2b-n]$. We again see that the elements in $\pi([1, 2b-n])$ are forced by those in $\pi([b+1,3b-n])$ as desired.
\end{proof}

Propositions \ref{prop:pi-form1} and \ref{prop:pi-form2} show what permutations must look like if $n$ is in a cycle of length at least 4. We can also consider the converses of these propositions. For Proposition \ref{prop:pi-form1}, the converse question is: If a permutation $\pi$ can be written as
\[ \pi = (\alpha^{-1})^{*} (2n-2b+1)(2n-2b+2)\cdots (n)\ \alpha\] where $\alpha$ is 132-avoiding and $b \geq \frac{2n}{3},$ what can we say about the length of the cycle containing $n$?  We can consider the converse of Proposition \ref{prop:pi-form2} as well. The following example illustrates the cycle structure of $\pi$ when $n=12$ and $\pi$ has the structure provided in Propositions \ref{prop:pi-form1} and \ref{prop:pi-form2}.



\begin{example} Let $n=12$. Suppose $b=7 < \frac{2n}{3}$. Then there are two choices for $\alpha \in \S_{2b-n}(132) = \S_{2}(132)$. In both cases, the resulting permutation is cyclic:
\begin{align*}
\fbox{6\ 7}\ 8\ 9\ 10\ 11\ 12\ \fbox{1\ 2}\ 3\ 4\ 5 &= (1, 6, 11, 4, 9, 2, 7, 12, 5, 10, 3, 8)\\
\fbox{7\ 6}\ 8\ 9\ 10\ 11\ 12\ \fbox{2\ 1}\ 3\ 4\ 5 &= (1, 7, 12, 5, 10, 3, 8, 2, 6, 11, 4, 9)
\end{align*}
Now suppose $b=8 \geq \frac{2n}{3}$, $\alpha = \alpha_1\alpha_2\alpha_3\alpha_4 \in \S_4(132)$, and let 
\[ \pi = (\alpha_1^{-1} + 4)(\alpha_2^{-1}+4)(\alpha_3^{-1} + 4)(\alpha_4^{-1} + 4)\ 9\ 10\ 11\ 12\ \alpha_1\alpha_2\alpha_3\alpha_4.\]
Note that for $i \in [1,4],$ $\pi(\alpha_i) = \alpha_{\alpha_i}^{-1} + 4 = i+4.$ Thus the cycle form of $\pi$ can be written as
\[ \pi = (5, 9, \alpha_1)(6, 10, \alpha_2)(7, 11, \alpha_3)(8, 12, \alpha_4)\]
for all fourteen of these permutations. It is interesting to note that although $\pi$ has the structure provided in Proposition \ref{prop:pi-form1}, it is not the case that $n$ is in a cycle of length at least 4 so the converse of the proposition does not hold.

If $b = 9$ and $\alpha \in \S_3(132),$ then there are five possible choices for $\alpha$ and thus five permutations:
\begin{align*}
\fbox{4\ 5\ 6}\ 7\ 8\ 9\ 10\ 11\ 12\ \fbox{1\ 2\ 3} &= (1,4,7,10)(2,5,8,11)(3,6,9,12)\\
\fbox{5\ 4\ 6}\ 7\ 8\ 9\ 10\ 11\ 12\ \fbox{2\ 1\ 3} &= (1,5,8,11)(2,4,7,11)(3,6,9,12)\\
\fbox{5\ 6\ 4}\ 7\ 8\ 9\ 10\ 11\ 12\ \fbox{3\ 1\ 2} &= (1,5,8,11)(2,6,9,12)(3,4,7,10)\\
\fbox{6\ 4\ 5}\ 7\ 8\ 9\ 10\ 11\ 12\ \fbox{2\ 3\ 1} &= (1,6,9,12)(2,4,7,10)(3,5,8,11)\\
\fbox{6\ 5\ 4}\ 7\ 8\ 9\ 10\ 11\ 12\ \fbox{3\ 2\ 1} &= (1,6,9,12)(2,5,8,11)(3,4,7,10)
\end{align*}
Finally, if $b=10$ there are two possible permutations, and there is one possible permutation for $b=11:$
\begin{align*}
\fbox{3\ 4}\ 5\ 6\ 7\ 8\ 9\ 10\ 11\ 12\ \fbox{1\ 2} &=(1,3,5,7,9,11)(2,4,6,8,10,12)\\
\fbox{4\ 3}\ 5\ 6\ 7\ 8\ 9\ 10\ 11\ 12\ \fbox{2\ 1} &=(1,4,6,8,10,12)(2,3,5,7,9,11)\\
\fbox{2}\ 3\ 4\ 5\ 6\ 7\ 8\ 9\ 10\ 11\ 12\ \fbox{1} &= (1,2,3,4,5,6,7,8,9,10,11,12)
\end{align*}
\end{example}

The lengths of the cycles in the above example appear to be related to $n$ and the $\gcd(n,b)$. In fact, we can find the exact lengths of the cycle containing $n$ for all $n$ and $b$ as shown in the following propositions.

\begin{proposition}\label{prop:cyclen1} Let $n \geq 4,$ $b \geq \frac{2n}{3}$, $\alpha \in \S_{n-b}(132)$, and suppose $\pi = \pi_1\pi_2\cdots\pi_n$ with
\[\pi_k = \begin{cases} \alpha_k^{-1} + n-b & \text{if } k \in [1,n-b]\\
k+n-b & \text{if } k \in [n-b+1, b]\\
\alpha_{k-b} & \text{if } k \in [b+1, n].
\end{cases}
\] Then $\pi \in \A_n$ and $n$ is in a cycle of length $\dfrac{n}{\gcd(n, b)}$.
\end{proposition}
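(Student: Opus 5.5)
The plan is to verify the two claims separately: first that $\pi$ and $\pi^2$ both avoid $132$, then the cycle length, using the explicit block structure of $\pi$. Set $m=n-b$. Since $b\ge \frac{2n}{3}$ we have $3m\le n$, so $2m\le b$. Thus $\pi$ consists of three blocks:
\begin{itemize}[label={}]
\item an initial block in positions $[1,m]$, which is a copy of $\alpha^{-1}$ on the values $[m+1,2m]$;
\item an increasing run $\pi_k=k+m$ on positions $[m+1,b]$, with values $[2m+1,n]$;
\item a final block in positions $[b+1,n]$, which is a copy of $\alpha$ on the values $[1,m]$.
\end{itemize}

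For $132$-avoidance of $\pi$, I will do a case check on which block contains the ``1'' of a putative $132$ occurrence. The three blocks have values that are increasing from the first to the middle block and all smaller in the last block. Hence any occurrence must lie entirely within a single block. Within a block it is excluded because $\alpha$ avoids $132$, $\alpha^{-1}$ avoids $132$ (the pattern $132$ is an involution), and the middle run is increasing.

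Next I compute $\pi^2$ explicitly.
\begin{itemize}[label={}]
\item For $k\in[1,m]$: $\pi_k\in[m+1,2m]\subseteq[m+1,b]$, so $\pi^2_k=\alpha^{-1}_k+2m$.
\item For $k\in[m+1,b-m]$: $\pi^2_k=k+2m$.
\item For $k\in[b-m+1,b]$: $\pi^2_k=\alpha_{k+m-b}$.
\item For $k\in[b+1,n]$: $\pi^2_k=\alpha^{-1}_{\alpha_{k-b}}+m=k-b+m$.
\end{itemize}
So $\pi^2$ is a copy of $\alpha^{-1}$ on the values $[2m+1,3m]$, followed by the increasing run $[3m+1,n]$ (possibly empty), then a copy of $\alpha$ on $[1,m]$, then the increasing run $[m+1,2m]$. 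The same block-by-block check shows that no $132$ can straddle blocks:
\begin{itemize}[label={}]
\item a ``1'' in the first two blocks sees only larger elements in the increasing second run afterwards;
\item a ``1'' in the $\alpha$-block can only be followed by a smaller-than-``3'' element inside that block, since the last run is increasing and larger.
\end{itemize}
Hence $\pi\in\A_n$.

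For the cycle length, the key identity is $\pi(\alpha_j)=\alpha^{-1}_{\alpha_j}+m=j+m$ for $j\in[1,m]$, together with $\pi(b+j)=\alpha_j$ and $\pi(x)=x+m$ for $x\in[m+1,b]$. Consider the map $\sigma(x)\equiv x+m \pmod n$ on representatives in $[1,n]$. Define $\phi$ to be the identity on $[m+1,n]$ and $\phi(j)=\alpha_j$ for $j\in[1,m]$. I then claim that $\pi=\phi\circ\sigma\circ\phi^{-1}$, to be checked on each block:
\begin{itemize}[label={}]
\item on $[m+1,b]$ both sides give $x+m$;
\item on $[b+1,n]$, $\sigma(b+j)=j$, so $\phi\sigma(b+j)=\alpha_j=\pi(b+j)$;
\item on $\alpha_j$, $\phi\sigma\phi^{-1}(\alpha_j)=\phi(j+m)=j+m$.
\end{itemize}
Since $\phi$ fixes $n$, the cycle of $\pi$ containing $n$ has the same length as the orbit of $0$ under translation by $m$ in $\mathbb Z_n$. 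That length is $n/\gcd(n,m)=n/\gcd(n,b)$.

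The main point of care is the conjugacy bookkeeping, namely making sure that $\phi$ is a bijection and that $\pi$ agrees with $\phi\sigma\phi^{-1}$ on every block including the edge cases ($B$ empty when $3m=n$). The pattern-avoidance parts are routine once $\pi^2$ is written down.
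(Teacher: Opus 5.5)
Your proof is correct. The first half matches the paper: your four-piece formula for $\pi^2$ is exactly the paper's. Your block-by-block case check also supplies the $132$-avoidance verification, for $\pi$ as well as for $\pi^2$, which the paper leaves to the reader.

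The real difference is in the cycle-length part. The paper follows the orbit of $n$ informally. It interleaves single steps $k\mapsto k+(n-b)$ on $[n-b+1,b]$ with double steps $\pi^2_k=k+2(n-b)-n$ from $[b+1,n]$, and concludes that the orbit behaves like translation by $n-b$ modulo $n$. Implicitly this uses the fact that the orbit enters $[1,n-b]$ only as the midpoint of a double step.

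Your conjugacy $\pi=\phi\sigma\phi^{-1}$, with $\phi$ relabeling $[1,m]$ by $\alpha$, replaces that bookkeeping with an identity checked on three blocks. The detour through $\alpha_{k-b}$ is absorbed by the relabeling rather than handled by pairing steps. It also proves more than is stated: $\pi$ is conjugate to a rotation, so every cycle of $\pi$ has length $n/\gcd(n,b)$. This accounts for the full cycle types in the paper's $n=12$ example, not just the cycle containing $n$.

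One small slip: the ``$B$'' in your final paragraph is never defined. You presumably mean the run $[3m+1,n]$ of $\pi^2$, which is empty when $3m=n$ and plays no role in the conjugacy check.
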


\begin{proof} To show that $\pi \in \A_n,$ we need to show that $\pi^2$ avoids 132. We do this by writing the structure of $\pi^2$ explicitly. If $k \in [1,n-b]$, we see that $\alpha^{-1}_k + n-b \in [n-b+1, 2n-2b] \subseteq [2n-2b,b]$, with the last subset derived from the fact that $2n \leq 3b.$ Thus for $k \in [1,n-b]$, we have
\[ \pi^2_k = \pi(\alpha_k^{-1} + n-b) = (\alpha_k^{-1} + n-b) + n-b = \alpha_k^{-1} + 2n-2b.\]
For values of $\pi^2$ in the set $[n-b+1,b]$, we partition this set into two other sets. For $k \in [n-b+1,2b-n]$, we note that $k+n-b \in [2n-2b+1,b].$ Thus for $k \in [n-b+1,2b-n]$,
\[ \pi^2_k = \pi(k+n-b) = (k+n-b) + n-b = k + 2n-2b.\]
For the second set in the partition, that is, for $k\in [2b-n+1,b]$, we have
\[ \pi^2_k = \pi(k+n-b) = \alpha_{k+n-2b}.\]
Finally, for $k \in [b+1,n],$
\[ \pi^2_k = \pi(\alpha_{k-b}) = \alpha^{-1}(\alpha_{k-b}) + n-b = k + n-2b.\]

Putting these results together shows us that $\pi^2 = \pi^2_1\pi^2_2\cdots\pi^2_n$ is given by
\[ \pi^2_k = \begin{cases} \alpha_k^{-1} + 2n-2b & \text{if } k \in [1,n-b]\\
k + 2n-2b & \text{if } k \in [n-b+1, 2b-n]\\
\alpha(k+n-2b) & \text{if } k \in [2b-n+1,b]\\
k+n-2b & \text{if } k \in [b+1,n]
\end{cases}
\]
which is 132-avoiding as desired.

We now consider the cycle structure of $\pi.$ Let $n=q(n-b)+r$ with $r< n-b$. We know that $q\geq3$ since $3b \geq 2n$ implies $n \geq 3(n-b)$. Notice that for $k \in [n-b+1, b]$, we have $\pi_k=k+(n-b)$, and iteratively applying $\pi$ to these elements just adds $n-b$ each time with a result that is less than $n$.

Now for $k \in [b+1, n]$, we iteratively apply $\pi$ twice. Here we have $\pi^2_k = k + 2(n-b) - n.$ In this case, we applied $\pi$ twice, which added $(n-b)$ twice and then subtracted $n$. Equivalently, we can think of this as adding $2(n-b)$ and then taking the result modulo $n$. Thus, if we start with $n$ and apply $\pi$ iteratively, we will return to $n$ in $m$ steps exactly when $m(n-b)\equiv0\bmod n.$ The smallest such $m$ is exactly $n/\gcd(n,n-b)$ as desired. \end{proof}

\begin{proposition}\label{prop:cyclen2} Let $n \geq 4,$ $\frac{n}{2} < b < \frac{2n}{3}$, $\alpha \in \S_{2b-n}(132)$, and suppose $\pi = \pi_1\pi_2\cdots\pi_n$ with
\[ \pi_k = \begin{cases} \alpha_k^{-1} + n-b & \text{if } k \in [1,2b-n]\\
k + n-b & \text{if } k \in [2b-n+1, b]\\
\alpha_{k-b} & \text{if } k \in [b+1,3b-n]\\
k-b & \text{if } k \in [3b-n+1, n] \end{cases}\]
Then $\pi \in \A_n$ and $n$ is in a cycle of length $\dfrac{n}{\gcd(n, b)}$.
\end{proposition}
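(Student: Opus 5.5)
The plan is to follow the proof of Proposition~\ref{prop:cyclen1}. First I will check that $\pi$ itself avoids 132. Next I will compute $\pi^2$ explicitly and read off that it avoids 132. Finally I will track the orbit of $n$ by viewing $\pi$ as ``add $n-b$ modulo $n$'' along that orbit.

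\emph{Step 1: $\pi$ avoids 132.} By definition, $\pi$ is a concatenation of four blocks:
\begin{itemize}
\item positions $[1,2b-n]$ carry the values $[n-b+1,b]$, in the order of $\alpha^{-1}$;
\item positions $[2b-n+1,b]$ carry $[b+1,n]$, increasing;
\item positions $[b+1,3b-n]$ carry $[1,2b-n]$, in the order of $\alpha$;
\item positions $[3b-n+1,n]$ carry $[2b-n+1,n-b]$, increasing.
\end{itemize}
Suppose a 132 occurrence had entries in two different blocks. Since $\alpha$ and $\alpha^{-1}$ avoid 132, this can be ruled out by comparing block value ranges. Every value in the first two blocks exceeds every value in the last two. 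Within the first two blocks, the values of the first block are below those of the second, and the same holds for the third and fourth blocks. One then checks that no choice of blocks for the ``1'', ``3'' and ``2'' is consistent with these orderings.

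\emph{Step 2: compute $\pi^2$.} I go block by block, using $2n\ge 3b$ wherever needed. For $k\in[1,2b-n]$ we have $\pi_k=\alpha^{-1}_k+n-b\in[n-b+1,b]\subseteq[2b-n+1,b]$, so
\[
\pi^2_k=\alpha^{-1}_k+2n-2b .
\]
For $k\in[2b-n+1,b]$ we have $\pi_k=k+n-b\in[b+1,n]$. This splits into two ranges: if $k\le 4b-2n$ then $\pi^2_k=\alpha_{k+n-2b}$, and otherwise $\pi^2_k=k+n-2b$. For $k\in[b+1,3b-n]$ we get $\pi^2_k=\alpha^{-1}(\alpha_{k-b})+n-b=k+n-2b$. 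For $k\in[3b-n+1,n]$ we have $\pi_k=k-b\in[2b-n+1,n-b]\subseteq[2b-n+1,b]$, so $\pi^2_k=k+n-2b$.

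So $\pi^2$ has three blocks:
\begin{itemize}
\item a shifted copy of $\alpha^{-1}$ on the top values $[2n-2b+1,n]$;
\item then a copy of $\alpha$ on $[1,2b-n]$;
\item then the increasing run $(2b-n+1)\cdots(2n-2b)$ in positions $[4b-2n+1,n]$.
\end{itemize}
This is 132-avoiding by the same block argument: the first block lies above everything after it, and the last run is increasing and lies above the $\alpha$ block. Hence $\pi\in\A_n$.

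\emph{Step 3: cycle length.} I claim that along the orbit of $n$, each application of $\pi$ acts as $x\mapsto x+(n-b)\pmod n$, except that the excursions into the $\alpha$ blocks come in pairs that together act as adding $2(n-b)$ modulo $n$. The pieces are:
\begin{itemize}
\item on $[2b-n+1,b]$, $\pi_k=k+(n-b)$ exactly;
\item on $[3b-n+1,n]$, $\pi_k=k-b\equiv k+(n-b)\pmod n$;
\item for $k\in[b+1,3b-n]$, $\pi_k\in[1,2b-n]$ and $\pi^2_k=k+2(n-b)-n$.
\end{itemize}
Also, from $k\in[1,2b-n]$ the next step lands in $[2b-n+1,b]$, and it is only entered from $[b+1,3b-n]$.

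Starting at $n$ and applying $\pi$ $m$ times, the cumulative shift is $m(n-b)$ modulo $n$ at those times when the orbit is not in the middle of a paired step. The main obstacle is making this bookkeeping rigorous. I would handle it by showing that the set $[2b-n+1,n]$, which contains $n$, is visited with the ``add $n-b$ mod $n$'' rule, with each visit to $[1,2b-n]$ counted as one of the $m$ steps. That gives $\pi^m(n)=n$ exactly when $m(n-b)\equiv 0\pmod n$. The least such $m$ is $n/\gcd(n,n-b)=n/\gcd(n,b)$.
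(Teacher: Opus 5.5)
Your proposal is correct and takes the same route as the paper: the same case-by-case computation of $\pi^2$, collapsing to a shifted $\alpha^{-1}$ block, an $\alpha$ block and an increasing run, and the same ``add $n-b$ modulo $n$, with the excursions through $[1,2b-n]$ taken in pairs'' argument for the cycle of $n$. You also check that $\pi$ itself avoids 132, which the paper leaves implicit.

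The one point your bookkeeping should state explicitly, which the paper also glosses over, is that $m(n-b)\equiv 0 \pmod n$ cannot happen at a time in the middle of an excursion. At such a time, $m(n-b) \bmod n$ equals $\pi^{m-1}(n)-b\in[1,2b-n]$, which is nonzero modulo $n$. Equivalently, $\pi=\sigma^{-1}\phi\sigma$, where $\phi$ is the rotation $x\mapsto x+(n-b) \bmod n$ and $\sigma$ acts as $\alpha^{-1}$ on $[1,2b-n]$ and fixes every other point, including $n$.
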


\begin{proof} We begin by computing $\pi^2 = \pi_1^2\pi_2^2\cdots \pi_n^2$ in order to show that $\pi \in \A_n$. We examine five cases based on partitioning $[1,n]$ into five sets. First suppose $k \in [1,2b-n]$ which means $\alpha_k^{-1} +n-b \in [n-b+1,b].$ In this case
\[ \pi^2_k = \pi(\alpha_k^{-1}+n-b) = \alpha_k^{-1} + 2n-2b.\]
If $k \in [2b-n+1,4b-2n]$, then $k+n-b \in [b+1, 3b-n]$ so
\[ \pi^2_{k} = \pi(k+n-b) = \alpha(k+n-2b).\]
In the case where $k \in [4b-2n+1,b],$ we have $k + n-b \in [3b-n+1,n]$ and so
\[ \pi^2_k = \pi(k+n-b) = k+n-2b.\]
For the penultimate case where $k \in [b+1,3b-n],$ we see that $\alpha_{k-b} \in [1,2b-n]$ and so
\[ \pi^2_k = \pi(\alpha_{k-b}) = \alpha^{-1}(\alpha_{k-b}) + n-b = k+n-2b.\]
Finally, if $k \in [3b-n+1,n],$ then $k-b \in [2b-n+1,n-b]$ so
\[ \pi^2_k = \pi(k-b) = k+n-2b.\]

Notice the last three cases all yield $\pi^2_k = k+n-2b$, so the piecewise function for $\pi^2$ can be condensed into just three cases:
\[
\pi^2_k = \begin{cases}
\alpha_k^{-1} + 2n-2b & \text{if } k \in [1,2b-n]\\
\alpha(k+n-2b) & \text{if } k \in [2b-n+1, 4b-2n]\\
k+n-2b & \text{if } k \in [4b-2n+1,n].
\end{cases}
\]
We see then that $\pi^2$ is 132-avoiding as desired.

Again, we consider the cycle structure of $\pi$ by iteratively applying $\pi$ to elements. If $k \in [2b-n+1],$ then $\pi_k = k + (n-b)$, and so iteratively applying $\pi$ adds $n-b$ each time with a result that is less than $n$. 

If $k \in [b+1,3b-n],$ then $k \in [4b-2n+1,n]$ since $3b<2n$ implies $4b-2n+1 < b+1.$ In this case, we iteratively apply $\pi$ twice and see that $\pi^2_k = k + 2(n-b) - n.$ In this case, we applied $\pi$ twice, which added $(n-b)$ twice and then subtracted $n$. Similar to the proof of Proposition~\ref{prop:cyclen1}, we see that if we start with $n$ and apply $\pi$ iteratively, we will return to $n$ in $m$ steps exactly when $m(n-b)\equiv0\bmod n$. The smallest such $m$ is exactly $n/\gcd(n,n-b)$ as desired.
\end{proof}

Recall that we have only dealt with the case where $b>\frac{n}{2}$ but that the case where $b<\frac{n}{2}$ is obtained by taking the inverse of $\pi.$ We do not consider the case when $b = \frac{n}{2}$ since this would imply $n$ is in a $2$-cycle.

\begin{theorem}
    Let $n\geq 4$ and $a_{n}^{\geq 4} = \sum_{k=4}^n a_{n,k}$. Then \[
    a_{n}^{\geq 4} = 2\left[\sum_{i=1}^{\lfloor (n-1)/3\rfloor} c_{i} + \sum_{i=\lceil (n+1)/3\rceil}^{\lfloor  (n-1)/2\rfloor} c_{n-2i}\right]
    \]
    where $c_i$ is the $i$-th Catalan number.
    The generating function for $a_n^{\geq 4}$ is \[a_{\geq 4}(x) = 2\left(c(x^3)-1\right)\left[\frac{x}{1-x} + \frac{x^2}{1-x^2}\right].\]
\end{theorem}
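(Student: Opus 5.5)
The plan is to count, for each position $b=\pi^{-1}(n)$, the permutations supplied by Propositions~\ref{prop:pi-form1} and~\ref{prop:pi-form2}, and to keep only those for which $n$ is in a cycle of length at least $4$. Then I double the count by inversion and sum to get the generating function.

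\emph{Step 1 (reduction to $b>n/2$).} Let $\pi\in\A_n$ have $n$ in a $k$-cycle with $k\ge 4$, and let $\pi_b=n$. Then $b\ne n$, since otherwise $n$ is a fixed point. Also $b\ne n/2$, since then $\pi_n=b$ (entries after $n$ are smaller than those before it) and $n$ lies in a $2$-cycle.

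By Remark~\ref{rem:inverse}, $\pi\mapsto\pi^{-1}$ is an involution on $\A_n$. It preserves cycle lengths, since cycles are just reversed. It moves $n$ to position $\pi_n$, and $\pi_n\le n-b<n/2$ whenever $b>n/2$. Hence the permutations with $b<n/2$ are in bijection with those with $b>n/2$. This gives the overall factor of $2$, and it remains to count the case $n/2<b<n$.

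\emph{Step 2 (fixed $b>n/2$).} Propositions~\ref{prop:pi-form1} and~\ref{prop:pi-form2} say that any such $\pi$ has one specific form, determined by some $\alpha\in\S_{n-b}(132)$ when $b\ge 2n/3$, or by some $\alpha\in\S_{2b-n}(132)$ when $n/2<b<2n/3$. Propositions~\ref{prop:cyclen1} and~\ref{prop:cyclen2} give the converse: each such $\pi$ lies in $\A_n$, and $n$ is in a cycle of length $n/\gcd(n,b)=n/\gcd(n,n-b)$. Distinct $\alpha$, or distinct $b$, give distinct $\pi$. So for each admissible $b$ the count is $c_{n-b}$ or $c_{2b-n}$, provided the cycle length is at least $4$.

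\emph{Step 3 (the cycle-length condition).} This is the one point needing care; the rest is bookkeeping. Put $i=n-b$ and $g=\gcd(n,i)$.
\begin{itemize}
\item If $b\ge 2n/3$, then $i\le n/3$, so $g\le i\le n/3$ and the cycle length $n/g$ is at least $3$. It equals $3$ only when $g=n/3$, which forces $i=n/3$, i.e.\ $b=2n/3$. So the admissible range is $1\le i\le\lfloor (n-1)/3\rfloor$, contributing $\sum_i c_i$.
\item If $n/2<b<2n/3$, then $n/3<i<n/2$, so $g\le i<n/2$ and the cycle length exceeds $2$. It cannot be $3$, since that would need $g=n/3$ dividing $i$, and no multiple of $n/3$ lies strictly between $n/3$ and $n/2$. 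So every $i$ with $\lceil (n+1)/3\rceil\le i\le \lfloor (n-1)/2\rfloor$ counts, contributing $c_{2b-n}=c_{n-2i}$.
\end{itemize}
Adding these and doubling gives the stated formula for $a_n^{\ge4}$.

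\emph{Step 4 (generating function).} For the first sum, $i\ge1$ is admissible exactly when $n\ge 3i+1$. Hence
\[
\sum_{n}\sum_{i}c_ix^n=\sum_{i\ge1}c_ix^{3i}\cdot\frac{x}{1-x}=\bigl(c(x^3)-1\bigr)\frac{x}{1-x}.
\]
For the second sum, set $j=n-2i$. The condition $i>n/3$ becomes $i>j$, and $i<n/2$ becomes $j\ge1$. Thus $n=j+2i$ with $j\ge1$ and $i\ge j+1$, and
\[
\sum_{j\ge1}c_jx^j\sum_{i\ge j+1}x^{2i}=\sum_{j\ge1}c_jx^{3j}\cdot\frac{x^2}{1-x^2}=\bigl(c(x^3)-1\bigr)\frac{x^2}{1-x^2}.
\]
Multiplying the sum of these two series by $2$ gives $a_{\ge4}(x)$.
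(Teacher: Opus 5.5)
Your Steps 2--4 follow the paper's route and are correct. Your exclusion of cycle length $3$ and your evaluation of the two sums are more careful than the paper's one-line version. The gap is in Step 1: both reductions there depend on $\pi^2$ avoiding $132$, and as written neither is justified.

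\textbf{The case $b=n/2$.} The fact that the entries after $n$ are smaller than those before it only gives $\pi([b+1,n])=[1,b]$. Hence $\pi_n\le b$, not $\pi_n=b$. For example, $3421$ avoids $132$, has $n$ in position $2=n/2$, and ends in $1$. The correct argument uses $\pi^2$. Since $\pi([1,b])=[b+1,n]$ and $\pi([b+1,n])=[1,b]$, the square maps $[b+1,n]$ onto itself. Every entry in those positions exceeds $\pi^2_1$, so these entries must be increasing. Thus $\pi^2_n=n$, and $n$ is in a $2$-cycle.

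\textbf{The inversion step.} This is the more substantive omission. You show only that $\pi\mapsto\pi^{-1}$ sends the permutations with $b>n/2$ into those with $b<n/2$. That is an injection, and it gives only $a_n^{\geq 4}\ge 2\cdot(\text{number with } b>n/2)$. A permutation with both $\pi^{-1}(n)<n/2$ and $\pi_n<n/2$ would have its inverse also in the class $b<n/2$, and your count would miss it. Your ``hence'' therefore needs the converse: if $b<n/2$, then $\pi_n>n/2$.

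This converse does not follow from $132$-avoidance of $\pi$ alone; the paper proves it using $\pi^2$. Write $c=\pi_n$ and assume $c<n/2$, with $c\neq b$. A short check of the cases $c<b$ and $c>b$ shows that $1$ appears in $\pi^2$ before $n$, which sits at position $\pi^{-1}(b)$. Meanwhile $1<\pi^2_n=\pi_c<n$. So $1\,n\,\pi_{\pi_n}$ is a $132$ in $\pi^2$, a contradiction.

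With these two $\pi^2$ arguments added, your proof is complete and coincides with the paper's.
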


\begin{proof}
    Suppose $\pi\in\A_n$ has $n$ in a $k$-cycle with $k\geq 4$ and $\pi_b=n.$ Note that we cannot have $b=n/2$ since in that case, we would have the $b-1$ elements before $n$ smaller than the $b$ elements after $n$. In $\pi^2$, we would thus have the first $b$ elements smaller than the second $b$ elements. Since $\pi^2$ avoids 132, this would imply that $\pi^2_n=n$, and thus $n$ is in a 2-cycle. If $b<\frac{n}{2},$ the we must have $\pi_n>\frac{n}{2}$ since otherwise $1n\pi_{\pi_n}$ would be a 132 pattern in $\pi^2$. By inverses, it is enough to consider $b>\frac{n}{2}.$
    
    Propositions~\ref{prop:pi-form1} and \ref{prop:pi-form2} imply such permutations must be of the form \[ \pi= \underline{\alpha^{-1}} (2n-2b+1)(2n-2b+2)\cdots (n)\ \alpha\] where $\alpha\in\S_{n-b}(132)$ or \[ \pi = \underline{\alpha^{-1}}\ (b+1)(b+2)\cdots (n)\ \alpha \ (2b-n+1)(2b-n+1)\cdots(n-b)\]
where $\alpha\in\S_{2b-n}(132)$, and in both cases $\underline{\alpha^{-1}}$ is the inverse of $\alpha$ with each element shifted by $n-b$.  Propositions~\ref{prop:cyclen1} and \ref{prop:cyclen2} state that these are all valid permutations in $\A_n$. Provided that $n/\gcd(n,b)\geq 4$, they will have $n$ in a $k$-cycle with $n\geq 4.$ Removing the possibility that $b$ is $n/2$ or $n/3$, and using the fact that  $132$-avoiding permutations are enumerated by the Catalan numbers, the result follows. 
\end{proof}

\begin{proof}[Proof of Theorem~\ref{thm:main}]
    Considering the size of the cycle $n$ appears in and letting $a(x) = \Sav_{132}(x),$ we have
    \begin{align*}a(x) &=1+ a_1(x) + a_2(x)+a_3(x)+a_{\geq 4}(x)\\
    & = 1+  xa(x) + \frac{x^{2} c(x^{2})}{1 - x - x^{2} c(x^{2})}+ \left[2-\frac{2}{c(x^3c(x^3))}\right]a(x)+ 2\left(c(x^3)-1\right)\left[\frac{x}{1-x} + \frac{x^2}{1-x^2}\right].
    \end{align*}
    Solving for $a(x)$ gives us the result in Theorem~\ref{thm:main}.
\end{proof}

\begin{corollary}
   Let $n \geq 3$. Then
\[ a_{n,n} =  \sum_{\substack{1 \leq r <  \frac{n}{2} ,\\ \gcd(r, n) = 1}} 2c_{\min\{r, n-2r\}}
\]
where $c_n$ denotes the $n$th Catalan number. 
\end{corollary}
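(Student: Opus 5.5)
The plan is to read off $a_{n,n}$ from the classification of permutations in which $n$ lies in a long cycle, established in Propositions~\ref{prop:pi-form1}--\ref{prop:cyclen2}. The case $n=3$ is checked by hand. The only cyclic permutations of $[3]$ are $231$ and $312$. Both avoid $132$, and their squares are $312$ and $231$, which also avoid $132$. So $a_{3,3}=2$. The right-hand side has the single term $r=1$, giving $2c_{\min\{1,1\}}=2$, so the formula holds.

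For $n\ge 4$, a permutation in $\A_{n,n}$ has $n$ in a cycle of length $n\ge 4$, so everything in the proof of the theorem on $a_n^{\geq 4}$ applies. Let $\pi_b=n$. That proof shows $b\neq n/2$, and passing to $\pi^{-1}$ (Remark~\ref{rem:inverse}) interchanges the cases $b>n/2$ and $b<n/2$. Inversion preserves the cycle type, so in particular it preserves being an $n$-cycle. The two cases therefore contribute equally, which gives the factor $2$. It remains to show that the number of $n$-cycles in $\A_n$ with $b>n/2$ equals $\sum c_{\min\{r,n-2r\}}$ over $1\le r<n/2$ with $\gcd(r,n)=1$.

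Write $r=n-b$, so that $b>n/2$ corresponds exactly to $1\le r<n/2$. By Propositions~\ref{prop:pi-form1} and~\ref{prop:pi-form2}, every such $\pi$ has one of the two displayed forms and is determined by the pair $(b,\alpha)$. Conversely, by Propositions~\ref{prop:cyclen1} and~\ref{prop:cyclen2}, every such pair gives a permutation in $\A_n$ in which $n$ lies in a cycle of length $n/\gcd(n,b)=n/\gcd(n,r)$. The correspondence is injective because $b$ is recovered as $\pi^{-1}(n)$, and $\alpha$ is recovered as a block of consecutive entries of $\pi$. Hence $n$ lies in an $n$-cycle exactly when $\gcd(r,n)=1$.

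For each admissible $r$, count the choices of $\alpha$:
\begin{itemize}
\item If $b\ge 2n/3$, equivalently $r\le n/3$, then $\alpha\in\S_r(132)$, giving $c_r$ choices.
\item If $n/2<b<2n/3$, equivalently $n/3<r<n/2$, then $\alpha\in\S_{2b-n}(132)=\S_{n-2r}(132)$, giving $c_{n-2r}$ choices.
\end{itemize}
Since $r\le n-2r$ is equivalent to $r\le n/3$, both cases are captured by $c_{\min\{r,n-2r\}}$. At the boundary $r=n/3$ the two expressions agree anyway.

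The main point needing care is that the converse propositions are applied with the correct parameter ranges and that the map $(b,\alpha)\mapsto\pi$ is a bijection onto the relevant set. Everything else is bookkeeping on the substitution $r=n-b$.
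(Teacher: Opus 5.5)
Your proposal is correct and follows the paper's argument: classify via Propositions~\ref{prop:pi-form1}--\ref{prop:cyclen2}, obtain the factor $2$ from inversion, and count $\alpha$ by $c_r$ or $c_{n-2r}$ according as $r\le n/3$ or $r>n/3$, with the condition $\gcd(r,n)=1$ selecting the $n$-cycles. Your separate check of $n=3$ is a real improvement: those propositions require $n\ge 4$, and the paper's split into $r\le\lfloor (n-1)/3\rfloor$ and $r\ge\lceil (n+1)/3\rceil$ omits $r=n/3$, which is exactly the single term needed when $n=3$.
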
 

\begin{proof}
Note that by Propositions~\ref{prop:pi-form1} and \ref{prop:pi-form2}, we have $n$ in an $n$-cycle exactly when $\gcd(b,n)=1.$ There are $2c_r$ permutations (including inverses) obtained from Proposition~\ref{prop:pi-form1} for each $r\in [1,\lfloor (n-1)/3\rfloor]$ with $\gcd(r,n)=1$ and $2c_{n-2r}$ permutations (including inverses) obtained from Proposition~\ref{prop:pi-form1} for each $r\in [\lceil (n+1)/3\rceil,\lfloor (n-1)/2\rfloor]$ with $\gcd(r,n)=1$. Since you cannot have $r=(n+1)/3$ for $n\geq 3$ if $\gcd(n,r)=1$, the result follows. 
\end{proof}


\section{Analytic properties of $\Sav_{132}(x)$}
In this section, we examine the analytic properties of $\Sav_{132}(x)$ in order to bound the coefficients $a_n$, that is the number of permutations of size $n$ that strongly avoid 132. In doing so, we answer Question~\ref{ques:bona} in the affirmative.  We begin by analyzing the singularities of $\Sav_{132}(n).$ It is easy to see that the generating function is analytic on $|x|<1/2$ and that it has a singularity at $x=1/2$ as seen in the following lemma.

\begin{lemma} The function $\Sav_{132}(x)$ has a singularity at $x=1/2$ and is analytic on $|x| < 1/2.$
\end{lemma}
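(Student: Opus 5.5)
Throughout, write $c(x)=\frac{1-\sqrt{1-4x}}{2x}$, which is analytic on $|x|<1/4$. It extends continuously to $|x|\le 1/4$, where $|c(x)|\le c(1/4)=2$. I will work from the explicit formula in Theorem~\ref{thm:main} and check each building block for analyticity on $|x|<1/2$.

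\textbf{The building blocks.}
\begin{itemize}
\item For $|x|<1/2$ we have $|x^2|<1/4$ and $|x^3|<1/8<1/4$. So $c(x^2)$ and $c(x^3)$ are analytic there, and $\frac{2x(1+2x)(c(x^3)-1)}{1-x^2}$ is analytic because $1-x^2\ne 0$.
\item The power series $u(x)=x^3c(x^3)$ has nonnegative coefficients. Hence $|u(x)|\le u(|x|)<u(1/2)=\frac18c(\frac18)$. Since $c(1/8)=4-2\sqrt2\approx 1.17$, we get $u(1/2)\approx 0.146<1/4$. So $C(x):=c(u(x))$ is analytic on $|x|<1/2$, indeed on a slightly larger disk.
\end{itemize}

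\textbf{The two denominators.}
\begin{itemize}
\item For $D_1(x)=1-x-x^2c(x^2)$: the series $x+x^2c(x^2)$ has nonnegative coefficients. So for $|x|<1/2$ we get $|x+x^2c(x^2)|<\frac12+\frac14c(\frac14)=\frac12+\frac12=1$, and $D_1\ne0$.
\item For $D_2(x)=2-(1+x)C(x)$: write $(1+x)C(x)=\sum d_nx^n$ with $d_n\ge0$. Then $|(1+x)C(x)|\le (1+|x|)C(|x|)$. At $r=1/2$ this is $\frac32 c(u(1/2))$. With $u(1/2)\approx0.146$ we have $c(0.146)\approx 1.2$, so the quantity is about $1.8<2$. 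Thus $D_2\ne0$ on $|x|<1/2$.
\end{itemize}
Combining the two bullet lists, $\Sav_{132}(x)$ is a composition, product and quotient of functions analytic on $|x|<1/2$ with nonvanishing denominators, so it is analytic there.

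\textbf{The singularity at $x=1/2$.} The singularity comes from $c(x^2)$, whose branch point is at $x=\pm1/2$. Near $x=1/2$,
\[
x^2c(x^2)=\tfrac12\bigl(1-\sqrt{1-4x^2}\bigr).
\]
Since $1-x-\frac12=\frac12-x$ at the branch point, we get
\[
D_1(x)=\tfrac12-x+\tfrac12\sqrt{1-4x^2}.
\]
This tends to $0$ as $x\to1/2$, and $D_1$ has a square-root type vanishing there. Explicitly,
\[
D_1(x)\sim\tfrac12\sqrt{(1-2x)(1+2x)}\sim\tfrac{1}{\sqrt2}\sqrt{1-2x}.
\]
So $\frac{1-x}{D_1(x)}$ blows up like $(1-2x)^{-1/2}$.

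The remaining pieces are analytic at $x=1/2$, because $u(1/2)<1/4$ and $x^3<1/4$. The factor $\frac{C}{D_2}$ is nonzero there, since $C(1/2)>0$ and $D_2(1/2)>0$. The added term $\frac{2x(1+2x)(c(x^3)-1)}{1-x^2}$ is finite at $x=1/2$. Hence $\Sav_{132}(x)\to\infty$ as $x\to1/2^-$, so $x=1/2$ is a singularity.

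\textbf{Main obstacle.} I expect the delicate step to be the numerical verification that $D_2$ does not vanish on $|x|\le1/2$. This needs an honest bound on $c(u(1/2))$. I would compute it exactly:
\[
c(1/8)=4-2\sqrt2,\qquad u=\tfrac{4-2\sqrt2}{8}=\tfrac{2-\sqrt2}{4}.
\]
Then evaluate $c(u)=\frac{1-\sqrt{1-4u}}{2u}$ in closed form and confirm $\frac32c(u)<2$, i.e.\ $c(u)<4/3$. As a check, $1-4u=\sqrt2-1\approx0.414$, whose square root is about $0.644$, giving $c(u)\approx0.356/0.293\approx1.216<4/3$.
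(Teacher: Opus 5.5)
Your proposal is correct and follows essentially the same route as the paper. You treat the factors separately and show that $2-(1+x)c\bigl(x^3c(x^3)\bigr)$ cannot vanish on $|x|<1/2$ via the bound $|x^3c(x^3)|\le \frac18 c(\frac18)=\frac{2-\sqrt2}{4}$, which gives $\frac32\, c\bigl(\frac{2-\sqrt2}{4}\bigr)\approx 1.825<2$. Two of your steps are more careful than the paper, which only asserts them: the nonnegative-coefficient argument that $1-x-x^2c(x^2)\neq 0$ on the open disk, and the explicit check that $\frac{1-x}{1-x-x^2c(x^2)}$ blows up at $x=1/2$ while the remaining factors stay finite and nonzero, so the singularity is not cancelled.
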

\begin{proof}
Recall 
    \[
\Sav_{132}(x)
=\frac{
c\bigl(x^3 c(x^3)\bigr)}{{
2-(1+x)c\bigl(x^3 c(x^3)\bigr)}}\Bigg(\dfrac{1-x}{1-x-x^2 c(x^2)}
\;+\;\dfrac{2x(1+2x)\bigl(c(x^3)-1\bigr)}{1-x^2}\Bigg),
\]
where $c(x) = \frac{1-\sqrt{1-4x}}{2x}$. First note that 
\[\dfrac{1-x}{1-x-x^2 c(x^2)} = \frac{2(1-x)}{1 - 2x + \sqrt{1 - 4x^2}},\]
which only has singularities $|x| = 1/2.$
Next, notice that 
\[\dfrac{2x(1+2x)\bigl(c(x^3)-1\bigr)}{1-x^2}\]
only has singularities $|x| = 1$ and the singularity of $c(x^3)$ which is $\frac{1}{4^{1/3}} > 1/2$.
Finally, consider
\[\frac{
c\bigl(x^3 c(x^3)\bigr)}{{
2-(1+x)c\bigl(x^3 c(x^3)\bigr)}}.\]
The smallest singularity of $c(x^3c(x^3))$ occurs when $x^3c(x^3) = 1/4,$ or equivalently, when $x=(3/16)^{1/3} > 1/2.$ When considering the denominator, we see that for $|x| < 1/2,$ we have
\[ |x^3c(x^3)| \leq \dfrac{1}{8}\left|c\left(\dfrac{1}{8}\right)\right| = \frac{1-\sqrt{1/2}}{2}. \]
Thus,
\[ |1+x|\left|c\bigl(x^3 c(x^3)\bigr)\right| \leq \frac{3}{2}\cdot c\left(\frac{1-\sqrt{1/2}}{2}\right) = \frac{3}{2}\left(\frac{1-\sqrt{1-2(1-\sqrt{1/2})}}{1-\sqrt{1/2}}\right) < 2,\]
and so the denominator does not vanish on $|x| < 1/2.$
\end{proof}

In order to find the asymptotic behavior of the terms, we expand the function around 1/2.

\begin{lemma}\label{lem:epsilon} Let 
\[ f(x) = \frac{1-x}{1-x-x^2c(x^2)}.\]
Then expanding $f(x)$ around $x=1/2$ gives 
\[ f(x) \approx \frac{1}{\sqrt{2}}(1-2x)^{-1/2} \]
\end{lemma}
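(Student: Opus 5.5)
We will work with the closed form of $c$. Since $c(x^2)=\frac{1-\sqrt{1-4x^2}}{2x^2}$, we have $x^2c(x^2)=\frac{1-\sqrt{1-4x^2}}{2}$. As in the preceding lemma,
\[
f(x)=\frac{2(1-x)}{1-2x+\sqrt{1-4x^2}}.
\]
We then factor $1-4x^2=(1-2x)(1+2x)$, so that $\sqrt{1-4x^2}=\sqrt{1-2x}\,\sqrt{1+2x}$. Setting $u=\sqrt{1-2x}$, the denominator becomes $u\bigl(u+\sqrt{1+2x}\bigr)$. Hence
\[
f(x)=\frac{2(1-x)}{\sqrt{1+2x}+\sqrt{1-2x}}\,(1-2x)^{-1/2}.
\]
The prefactor $g(x)=\frac{2(1-x)}{\sqrt{1+2x}+\sqrt{1-2x}}$ is continuous at $x=1/2$ (approached from the unit-disk side, with the principal branch). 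At $x=1/2$ we have $\sqrt{1+2x}=\sqrt 2$ and $\sqrt{1-2x}=0$, so $g(1/2)=\frac{2\cdot\frac12}{\sqrt2}=\frac{1}{\sqrt2}$. This gives $f(x)=\frac{1}{\sqrt2}(1-2x)^{-1/2}+O(1)$ as $x\to 1/2$, which is the claimed approximation.

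To make the statement usable for singularity analysis, I would go one step further and expand $g$ in powers of $\sqrt{1-2x}$. Writing $1-x=\tfrac12+\tfrac12(1-2x)$ and $\sqrt{1+2x}=\sqrt{2-(1-2x)}$, we get $g=\frac{1}{\sqrt2}\bigl(1-\tfrac{1}{\sqrt2}\sqrt{1-2x}+O(1-2x)\bigr)$. Consequently
\[
f(x)=\frac{1}{\sqrt2}(1-2x)^{-1/2}-\frac12+O\bigl((1-2x)^{1/2}\bigr).
\]
The leading term is therefore exactly as stated, with all corrections of lower order.

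The only delicate point is interpretive rather than computational: the symbol ``$\approx$'' means agreement of the dominant singular term, i.e.\ $f(x)\sim\frac{1}{\sqrt2}(1-2x)^{-1/2}$ in a $\Delta$-domain at $1/2$. The singularity at $x=-1/2$ also needs a check. There $1+2x\to 0$, but the denominator tends to $2\neq 0$, so $f$ stays bounded near $-1/2$ and only carries a square-root-type singularity, which is of lower order. Thus $x=1/2$ is the dominant singularity of $f$, and the expansion above is what will be combined with the remaining factors of $\Sav_{132}(x)$, all analytic at $1/2$ by the previous lemma.
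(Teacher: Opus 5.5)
Your proposal is correct and follows essentially the same route as the paper. The paper substitutes $x=\tfrac12-\e$ and factors $1-x-x^2c(x^2)=\sqrt{\e}\,(\sqrt{\e}+\sqrt{1-\e})$, which is exactly your factorization $\tfrac12\sqrt{1-2x}\,\bigl(\sqrt{1-2x}+\sqrt{1+2x}\bigr)$ in a different variable; it then evaluates the remaining factor at $x=\tfrac12$. Your version is slightly sharper than the paper's: you keep an exact identity, record the next term $-\tfrac12$, and check that $x=-\tfrac12$ contributes only a lower-order singularity, none of which the paper spells out.
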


\begin{proof}
Let $x=1/2-\e$ as $\e \to 0^+.$ Then
\begin{align*}
    x^2c(x^2) = \frac{1-\sqrt{1-4(\frac{1}{2}-\e)^2}}{2} = \frac{1-\sqrt{4\e-4\e^2}}{2} = \frac{1}{2} - \sqrt{\e}\sqrt{1-\e}.
\end{align*}

Now, $1-x-x^2c(x^2)$ becomes
\begin{align*}1-x-x^2c(x^2) &= 1-\left(\frac{1}{2}-\e\right) - \left(\frac{1}{2} -\sqrt{\e}\sqrt{1-\e}\right)\\
&= \sqrt{\e}(\sqrt{\e} + \sqrt{1-\e}).\end{align*}
which approaches $\sqrt{\e}$ or $\sqrt{1/2-x}$ as $\e \to 0^+$. Thus, expanding $f(x)$ around $x=1/2$ gives
\[ f(x) \approx \frac{1-\frac{1}{2}}{\sqrt{\frac{1}{2}-x}} = \frac{1}{\sqrt{2}}(1-2x)^{-1/2}.\]
\end{proof}

We now note that the remaining terms in $\Sav_{132}(x)$ can be evaluated at $x=1/2$, and thus we can also expand $\Sav_{132}(x)$ around $x=1/2.$
\begin{proposition} Let $a_n$ be the number of 132 strongly avoiding permutations. Then
\[ a_n \sim K\frac{2^n}{\sqrt{n}},\]
where 
\[ K = \frac{1}{\sqrt{2\pi}} \cdot \frac{c\left(\frac{1}{8}c(\frac{1}{8})\right)}{2-\frac{3}{2}c\left(\frac{1}{8}c(\frac{1}{8})\right)} \approx 2.77826\]
\end{proposition}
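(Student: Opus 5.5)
We will use singularity analysis (the Flajolet--Odlyzko transfer theorem) on the closed form of Theorem~\ref{thm:main}. Write
\[
\Sav_{132}(x) = G(x)\bigl(f(x) + h(x)\bigr),\qquad G(x)=\frac{c\bigl(x^3c(x^3)\bigr)}{2-(1+x)c\bigl(x^3c(x^3)\bigr)},\quad h(x)=\frac{2x(1+2x)\bigl(c(x^3)-1\bigr)}{1-x^2},
\]
with $f$ as in Lemma~\ref{lem:epsilon}. The previous lemma already gives analyticity on $|x|<1/2$. The same estimates show that $G$ and $h$ are analytic on a disk $|x|<1/2+\delta$ for some $\delta>0$. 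Indeed, the singularities of $h$ are at $|x|=1$ and $|x|=4^{-1/3}>1/2$, and the singularity of $c(x^3c(x^3))$ is at $(3/16)^{1/3}>1/2$. The strict inequality $\frac32 c\bigl(\frac{1-\sqrt{1/2}}{2}\bigr)<2$ persists, by continuity, on a slightly larger closed disk, so the denominator of $G$ does not vanish there. Hence every singularity of $\Sav_{132}$ on $|x|=1/2$ comes from $f$.

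Next we locate the dominant singularities of $f$. We have
\[
f(x)=\frac{2(1-x)}{1-2x+\sqrt{1-4x^2}},
\]
and $\sqrt{1-4x^2}$ branches at both $x=1/2$ and $x=-1/2$. At $x=-1/2$ the denominator equals $2\neq 0$, so $f$ is bounded there and has the form (analytic) $+$ (analytic)$\cdot\sqrt{1+2x}$. That singular part contributes only $O(2^n n^{-3/2})$ to the coefficients. I would also check that the denominator $1-2x+\sqrt{1-4x^2}$ has no zeros in a $\Delta$-domain at $1/2$. Its only zero would require $(1-2x)^2=-(1-4x^2)$, which gives $x=1/2$ and nothing else. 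So $f$ is $\Delta$-analytic at both points $\pm1/2$, and we may apply the transfer theorem at each.

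At $x=1/2$, Lemma~\ref{lem:epsilon} gives $f(x)=\frac{1}{\sqrt2}(1-2x)^{-1/2}+O(1)$. Since $G$ and $h$ are analytic at $1/2$,
\[
\Sav_{132}(x)=\frac{G(1/2)}{\sqrt2}(1-2x)^{-1/2}+O(1).
\]
Transfer then yields $[x^n](1-2x)^{-1/2}\sim 2^n/\sqrt{\pi n}$, so
\[
a_n\sim \frac{G(1/2)}{\sqrt{2\pi}}\cdot\frac{2^n}{\sqrt n}.
\]
Evaluating $G(1/2)$, with $x^3=1/8$ and $1+x=3/2$, gives exactly the stated constant $K$. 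A numeric check then confirms $K\approx 2.778$.

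The main obstacle is rigor rather than computation. We need a precise error term in Lemma~\ref{lem:epsilon}, namely that $f-\frac{1}{\sqrt2}(1-2x)^{-1/2}$ is $O(1)$, or at least $o((1-2x)^{-1/2})$, uniformly in a $\Delta$-domain. We also need to handle the second boundary singularity at $-1/2$ and to extend the nonvanishing of $2-(1+x)c(\cdot)$ slightly beyond radius $1/2$. For the error term, I would rationalize $f$ to $f=\frac{(1-x)\bigl(\sqrt{1-4x^2}-(1-2x)\bigr)}{2x(1-2x)}$. This makes the expansion explicit as $(1-2x)^{-1/2}$ times an analytic function plus an analytic function, which settles the error estimate cleanly.
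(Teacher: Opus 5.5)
Your proposal is correct and takes the same route as the paper: you isolate the $(1-2x)^{-1/2}$ singularity of $f$ at $x=1/2$, evaluate the analytic factor $G$ there, and transfer to coefficients, while adding rigor the paper skips (the second branch point at $x=-1/2$, which contributes only $O(2^n n^{-3/2})$, $\Delta$-analyticity, and an explicit $O(1)$ error term). One harmless slip: squaring $\sqrt{1-4x^2}=-(1-2x)$ gives $(1-2x)^2=1-4x^2$, not $(1-2x)^2=-(1-4x^2)$, so the candidate roots are $x=0$ and $x=1/2$; $x=0$ is extraneous because the denominator equals $2$ there, so your conclusion that $x=1/2$ is the only zero still stands.
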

\begin{proof} We begin by expanding $\Sav_{132}(x)$ around $x=1/2$. Using the expansion of $f(x)$ from Lemma~\ref{lem:epsilon}, we have
\begin{align*} \Sav_{132}(x) &\approx K_1\left(\frac{1}{\sqrt{2}}(1-2x)^{-1/2} + K_2 \right)
\end{align*}
where \[K_1 = \frac{c(\frac{1}{8}c(\frac{1}{8}))}{2 - (\frac{3}{2})c(\frac{1}{8}c(\frac{1}{8}))}\] and 
\[ K_2 = \frac{2(c(\frac{1}{8}))}{\frac{3}{4}}.\]
By using the standard asymptotic formula for coefficients of $x^n$ of $(1-x)^{-\alpha}$, which are given by
$\frac{n^{\alpha-1}}{\Gamma(\alpha)}$, we achieve the desired result.
\end{proof}

\begin{corollary}
    The growth rate of $a_n$ is 2, and furthermore, $a_n<2^n$ for $n$ sufficiently large n.
\end{corollary}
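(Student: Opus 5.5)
The corollary follows directly from the asymptotic formula in the preceding proposition, $a_n \sim K\,2^n/\sqrt{n}$ with $K\approx 2.77826$. For the growth rate, I will take $n$th roots. Since $a_n/(K2^n n^{-1/2})\to 1$, we get $a_n^{1/n} = 2\,K^{1/n}\,n^{-1/(2n)}\,(1+o(1))^{1/n}$. Each of $K^{1/n}$, $n^{-1/(2n)}=\exp(-\tfrac{\log n}{2n})$, and $(1+o(1))^{1/n}$ tends to $1$, so $\lim_{n\to\infty} a_n^{1/n}=2$. As a cross-check, the preceding lemma says $\Sav_{132}(x)$ is analytic on $|x|<1/2$ and singular at $x=1/2$. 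The radius of convergence is therefore exactly $1/2$, and Cauchy--Hadamard gives $\limsup a_n^{1/n}=2$. This is consistent with, but weaker than, the full limit.

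For the bound $a_n<2^n$, I will use the ratio $a_n/2^n \sim K/\sqrt{n}$. Fix $\varepsilon=1$, say. There is then $N$ such that for $n\ge N$ we have $a_n \le (K+1)\,2^n/\sqrt{n}$. Once $\sqrt{n}>K+1$, that is $n>(K+1)^2\approx 14.3$, this gives $a_n<2^n$. So $a_n<2^n$ for all $n\ge \max\{N,15\}$, which is the statement "for $n$ sufficiently large."

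The real obstacle is not this deduction but the reliability of the input asymptotic. The preceding proposition uses a transfer from the local expansion at $x=1/2$, so I would check the Flajolet--Odlyzko hypotheses. First, $x=1/2$ must be the unique dominant singularity: $\sqrt{1-4x^2}$ is also singular at $x=-1/2$. Near $x=-1/2$, however, the denominator $1-2x+\sqrt{1-4x^2}$ is about $2$, so there $f$ is bounded with only a $(1+2x)^{1/2}$-type singularity. That contributes $O(2^n n^{-3/2})$, which is of lower order. Second, the function must be analytic in a $\Delta$-domain; this holds because the other factors have singularities at radius greater than $1/2$ and the denominator $2-(1+x)c(\cdot)$ does not vanish slightly beyond $|x|=1/2$, by continuity of the strict inequality in the preceding lemma. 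Given these, the $-1/2$ exponent yields $a_n = K2^n n^{-1/2}(1+o(1))$, and the argument above goes through.
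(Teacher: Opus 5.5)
Your proposal is correct and takes the same route as the paper, which gives no separate argument for this corollary and treats it as an immediate consequence of $a_n\sim K2^n/\sqrt{n}$; that is exactly how you obtain both $\lim a_n^{1/n}=2$ and $a_n<2^n$, via $a_n/2^n\sim K/\sqrt{n}\to 0$. Your extra check of the transfer hypotheses is sound and covers a point the paper's proposition passes over: the singularity at $x=-1/2$ is only of square-root type and contributes $O(2^n n^{-3/2})$, and the denominator stays nonzero slightly beyond $|x|=1/2$.
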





\bibliographystyle{amsplain}

\subsection*{Disclaimer}
The views expressed in this article do not necessarily represent 
the views or opinions of the U.S. Naval Academy, Department of the Navy, or Department of Defense or any of its components.

\end{document}